\newtheorem{Def}{Definition}[section]
\newtheorem{theorem}{Theorem}
\newtheorem{Prop}{Proposition}
\newtheorem{lemma}{Lemma}
\newtheorem{corollary}{Corollary}
\newtheorem{remark}{Remark}[section]
\title{On the consistency theory of high dimensional variable screening}
\author{Xiangyu Wang, Chenlei Leng and David Dunson}
\begin{document}

\maketitle

\begin{abstract}
Variable screening is a fast dimension reduction technique for assisting high dimensional feature selection. As a preselection method, it selects a moderate size subset of candidate variables for further refining via feature selection to produce the final model. The performance of variable screening depends on both computational efficiency and the ability to dramatically reduce the number of variables without discarding the important ones. When the data dimension $p$ is substantially larger than the sample size $n$, variable screening becomes crucial as 1) Faster feature selection algorithms are needed; 2) Conditions guaranteeing selection consistency might fail to hold.
This article studies a class of linear screening methods and establishes consistency theory for this special class. In particular, we prove the restricted diagonally dominant (RDD) condition is a necessary and sufficient condition for strong screening consistency. As concrete examples, we show two screening methods $SIS$ and $HOLP$ are both strong screening consistent (subject to additional constraints) with large probability if $n > O((\rho s + \sigma/\tau)^2\log p)$ under random designs. In addition, we relate the RDD condition to the irrepresentable condition, and highlight limitations of $SIS$. 
\end{abstract}

\section{Introduction}
The rapidly growing data dimension has brought new challenges to statistical variable selection, a crucial technique for identifying important variables to facilitate interpretation and improve prediction accuracy. Recent decades have witnessed an explosion of research in variable selection and related fields such as compressed sensing \cite{donoho2006compressed, baraniuk2007compressive}, with a core focus on regularized methods \cite{tibshirani1996regression, fan2001variable, candes2007dantzig, bickel2009simultaneous, zhang2010nearly}. Regularized methods can consistently recover the support of coefficients, i.e., the non-zero signals, via optimizing regularized loss functions under certain conditions \cite{zhao2006model, wainwright2009sharp, lee2013model}. However, in the big data era when $p$ far exceeds $n$, such regularized methods might fail due to two reasons. First, the conditions that guarantee variable selection consistency for convex regularized methods such as {\em lasso} might fail to hold when $p >> n$; Second, the computational expense of both convex and non-convex regularized methods increases dramatically with large $p$. 
\par
Bearing these concerns in mind, \cite{fan2008sure} propose the concept of ``variable screening'', a fast technique that reduces data dimensionality from $p$ to a size comparable to $n$, with all predictors having non-zero coefficients preserved. They propose a marginal correlation based fast screening technique ``Sure Independence Screening'' ($SIS$) that can preserve signals with large probability. However, this method relies on a strong assumption that the marginal correlations between the response and the important predictors are high \cite{fan2008sure}, which is easily violated in the practice. \cite{li2012robust} extends the marginal correlation to the Spearman's rank correlation, which is shown to gain certain robustness but is still limited by the same strong assumption. \cite{wang2009forward} and \cite{cho2012high} take a different approach to attack the screening problem. They both adopt variants of a forward selection type algorithm that includes one variable at a time for constructing a candidate variable set for further refining. These methods eliminate the strong marginal assumption in \cite{fan2008sure} and have been shown to achieve better empirical performance. However, such improvement is limited by the extra computational burden caused by their iterative framework, which is reported to be high when $p$ is large \cite{wang2014high}. To ameliorate concerns in both screening performance and computational efficiency, \cite{wang2014high} develop a new type of screening method termed ``High-dimensional ordinary least-square projection'' ($HOLP$). This new screener relaxes the strong marginal assumption required by $SIS$ and can be computed efficiently (complexity is $O(n^2p)$), thus scalable to ultra-high dimensionality.
\par
This article focuses on linear models for tractability. As computation is one vital concern for designing a good screening method, we primarily focus on a class of linear screeners that can be efficiently computed, and study their theoretical properties. The main contributions of this article lie in three aspects.
\begin{enumerate}
\item  We define the notion of strong screening consistency to provide a unified framework for analyzing screening methods. In particular, we show a necessary and sufficient condition for a screening method to be strong screening consistent is that the screening matrix is restricted diagonally dominant (RDD). This condition gives insights into the design of screening matrices, while providing a framework to assess the effectiveness of screening methods.
\item We relate RDD to other existing conditions. The irrepresentable condition (IC) \cite{zhao2006model} is necessary and sufficient for sign consistency of lasso \cite{tibshirani1996regression}. In contrast to IC that is specific to the design matrix, RDD involves another ancillary matrix that can be chosen arbitrarily. Such flexibility allows RDD to hold even when IC fails if the ancillary matrix is carefully chosen (as in $HOLP$). When the ancillary matrix is chosen as the design matrix, certain equivalence is shown between RDD and IC, revealing the difficulty for $SIS$ to achieve screening consistency. We also comment on the relationship between RDD and the restricted eigenvalue condition (REC) \cite{bickel2009simultaneous} which is commonly seen in the high dimensional literature. We illustrate via a simple example that RDD might not be necessarily stronger than REC.

\item We study the behavior of $SIS$ and $HOLP$ under random designs, and prove that a sample size of $n = O\big((\rho s + \sigma/\tau)^2\log p\big)$ is sufficient for $SIS$ and $HOLP$ to be screening consistent, where $s$ is the sparsity, $\rho$ measures the diversity of signals and $\tau/\sigma$ evaluates the signal-to-noise ratio. This is to be compared to the sign consistency results in \cite{wainwright2009sharp} where the design matrix is fixed and assumed to follow the IC.
\end{enumerate}
\par
The article is organized as follows. In Section 1, we set up the basic problem and describe the framework of variable screening. In Section 2, we provide a deterministic necessary and sufficient condition for consistent screening. Its relationship with the irrepresentable condition is discussed in Section 3. In Section 4, we prove the consistency of $SIS$ and $HOLP$ under random designs by showing the RDD condition is satisfied with large probability, although the requirement on $SIS$ is much more restictive.

\section{Linear screening}
Consider the usual linear regression 
\begin{align*}
  Y = X\beta + \epsilon,
\end{align*}
where $Y$ is the $n\times 1$ response vector, $X$ is the $n\times p$ design matrix and $\epsilon$ is the noise. The regression task is to learn the coefficient vector $\beta$. In the high dimensional setting where $p>>n$, a sparsity assumption is often imposed on $\beta$ so that only a small portion of the coordinates are non-zero. Such an assumption splits the task of learning $\beta$ into two phases. The first is to recover the support of $\beta$, i.e., the location of non-zero coefficients; The second is to estimate the value of these non-zero signals. This article mainly focuses on the first phase.
\par
As pointed out in the introduction, when the dimensionality is too high, using regularization methods methods raises concerns both computationally and theoretically. To reduce the dimensionality, \cite{fan2008sure} suggest a variable screening framework by finding a submodel
\begin{align*}
  \mathcal{M}_d = \{i~:~|\hat\beta_i|\mbox{ is among the largest d coordinates of }|\hat\beta|\}\quad\mbox{or}\quad
  \mathcal{M}_\gamma = \{i~:~|\hat\beta_i|>\gamma\}.
\end{align*}
Let $Q = \{1,2,\cdots, p\}$ and define $S$ as the true model with $s = |S|$ being its cardinarlity. 
The hope is that the submodel size $|\mathcal{M}_d|$ or $|\mathcal{M}_\gamma|$ will be smaller or comparable to $n$, while $S\subseteq \mathcal{M}_d$ or $S\subseteq \mathcal{M}_\gamma$. To achieve this goal two steps are usually involved in the screening analysis. The first is to show there exists some $\gamma$ such that $\min_{i\in S}|\hat\beta_i|>\gamma$ and the second step is to bound the size of $|\mathcal{M}_\gamma|$ such that $|\mathcal{M}_\gamma| = O(n)$. To unify these steps for a more comprehensive theoretical framework, we put forward a slightly stronger definition of screening consistency in this article.
\begin{Def} (Strong screening consistency) An estimator $\hat\beta$ (of $\beta$) is strong screening consistent if it satisfies that
  \begin{align}
    \min_{i\in S} |\hat\beta_i| > \max_{i\not\in S}|\hat\beta_i|\label{eq:zero}
  \end{align}
and
  \begin{align}
    sign(\hat\beta_i) = sign(\beta_i), \quad \forall i\in S. \label{eq:sign}
  \end{align}
\end{Def}
\begin{remark}
  This definition does not differ much from the usual screening property studied in the literature, which requires $\min_{i\in S} |\hat\beta_i| > \max^{(n-s)}_{i\not\in S}|\hat\beta_i|$, where $\max^{(k)}$ denotes the $k^{th}$ largest item.
\end{remark}
The key of strong screening consistency is the property \eqref{eq:zero} that
requires the estimator to preserve consistent ordering of the zero and
non-zero coefficients. It is weaker than variable selection consistency in
\cite{zhao2006model}. The requirement in \eqref{eq:sign} can be seen as a
relaxation of the sign consistency defined in \cite{zhao2006model}, as no
requirement for $\hat\beta_i, i\not\in S$ is needed. As shown later, such relaxation tremendously reduces the restriction on the design matrix, and allows screening methods to work for a broader choice of $X$.
\par
The focus of this article is to study the theoretical properties of a special
class of screeners that take the linear form as
\begin{align*}
  \hat\beta = AY
\end{align*}
for some $p\times n$ ancillary matrix $A$. Examples include sure independence
screening ($SIS$) where $A =  X^T/n$ and high-dimensional ordinary least-square
projection ($HOLP$) where $A = X^T(XX^T)^{-1}$. We choose to study the class
of linear estimators because linear screening is computationally efficient and
theoretically tractable. We note that the usual ordinary least-squares
estimator is also a special case of linear estimators although it is not well defined for $p>n$.

\section{Deterministic guarantees}
In this section, we derive the necessary and sufficient condition that guarantees $\hat\beta = AY$ to be strong screening consistent. The design matrix $X$ and the error $\epsilon$ are treated as fixed in this section and we will investigate random designs later. We consider the set of sparse coefficient vectors defined by
\begin{align*}
  \mathcal{B}(s, \rho) = \bigg\{\beta\in \mathcal{R}^p:~|supp(\beta)|\leq s, \quad\frac{\max_{i\in supp(\beta)}|\beta_i|}{\min_{i\in supp(\beta)}|\beta_i|}\leq \rho\bigg\}.
\end{align*}
The set $\mathcal{B}(s,\rho)$ contains vectors having at most $s$ non-zero
coordinates with the ratio of the largest and smallest coordinate bounded by $\rho$. Before proceeding to the main result
of this section, we introduce some terminology that helps to establish the theory.

\begin{Def}(restricted diagonally dominant matrix) A $p\times p$ symmetric matrix $\Phi$ is restricted diagonally dominant with sparsity $s$ if for any $I\subseteq Q$, $|I|\leq s-1$ and $i\in Q \setminus I $
  \begin{align*}
    \Phi_{ii}> C_0\max\bigg\{\sum_{j\in I} |\Phi_{ij} + \Phi_{kj}|, ~\sum_{j\in I} |\Phi_{ij} - \Phi_{kj}|\bigg\} + |\Phi_{ik}|\quad \forall k \neq i, ~k\in Q\setminus I,
  \end{align*}
where $C_0 \geq 1$ is a constant.
\end{Def}
Notice this definition implies that for $i\in Q\setminus I$
\begin{align}
\Phi_{ii}\geq C_0\bigg(\sum_{j\in I} |\Phi_{ij} + \Phi_{kj}| + \sum_{j\in I} |\Phi_{ij} - \Phi_{kj}|\bigg)/2 \geq C_0\sum_{j\in I} |\Phi_{ij}|,\label{eq:dom}
\end{align}
which is related to the usual diagonally dominant matrix. The restricted diagonally dominant matrix provides a necessary and sufficient condition for any linear estimators $\hat\beta = AY$ to be strong screening consistent. More precisely, we have the following result.
\begin{theorem}\label{thm:2}
  For the noiseless case where $\epsilon = 0$, a linear estimator $\hat\beta =  AY$ is strong screening consistent for every $\beta \in \mathcal{B}(s, \rho)$,  if and only if the screening matrix $\Phi = AX$ is restricted diagonally dominant with sparsity $s$ and $C_0\geq \rho$.
\end{theorem}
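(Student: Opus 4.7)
Because $\epsilon=0$ the estimator collapses to $\hat\beta=\Phi\beta$ with $\Phi=AX$. Fix any $\beta\in\mathcal{B}(s,\rho)$ with $S=\operatorname{supp}(\beta)$, so $|S|\le s$, and write
$\hat\beta_i=\Phi_{ii}\beta_i+\sum_{j\in S\setminus\{i\}}\Phi_{ij}\beta_j$ for $i\in S$ and $\hat\beta_k=\sum_{j\in S}\Phi_{kj}\beta_j$ for $k\notin S$. Setting $I=S\setminus\{i\}$ puts $|I|\le s-1$ and $i,k\in Q\setminus I$, matching exactly the index pattern in the RDD definition. Strong screening consistency is then two conditions on these quantities: $\operatorname{sign}(\hat\beta_i)=\operatorname{sign}(\beta_i)$ for every $i\in S$, and $\hat\beta_i-\hat\beta_k>0$ together with $\hat\beta_i+\hat\beta_k>0$ whenever $\beta_i>0$ (and the mirror statement when $\beta_i<0$). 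This decomposition is what makes the $\pm$ structure inside the RDD definition natural, since $\hat\beta_i\pm\hat\beta_k$ carries the coefficients $\Phi_{ij}\pm\Phi_{kj}$ on each $\beta_j$.

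For sufficiency, assume $\Phi$ is RDD with sparsity $s$ and $C_0\ge\rho$. Sign consistency is handled by the triangle inequality together with \eqref{eq:dom}: since $|\beta_j|\le\rho|\beta_i|$ for every $j\in S$, one has $|\hat\beta_i-\Phi_{ii}\beta_i|\le\rho|\beta_i|\sum_{j\in I}|\Phi_{ij}|<\Phi_{ii}|\beta_i|$, so $\hat\beta_i$ inherits the sign of $\Phi_{ii}\beta_i=\beta_i\cdot(\text{positive})$. For the ordering, assume WLOG $\beta_i>0$ and bound $\hat\beta_i\pm\hat\beta_k$ from below by $(\Phi_{ii}\pm\Phi_{ki})|\beta_i|-\rho|\beta_i|\sum_{j\in I}|\Phi_{ij}\pm\Phi_{kj}|$. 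The RDD inequality, applied once per sign, shows that this lower bound is strictly positive (the $|\Phi_{ik}|$ term absorbs $\mp\Phi_{ki}$), and hence $|\hat\beta_k|<\hat\beta_i=|\hat\beta_i|$.

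For necessity, given strong screening consistency on all of $\mathcal{B}(s,\rho)$, I would fix arbitrary $I$, $i$, $k$ as in the RDD definition and construct a worst-case $\beta$ with support $I\cup\{i\}$, $|\beta_i|=1$ and $|\beta_j|=\rho$ for $j\in I$. By playing the free signs of the $\beta_j$ adversarially against either $\Phi_{ij}-\Phi_{kj}$ or $\Phi_{ij}+\Phi_{kj}$ (and by flipping the sign of $\beta_i$ so that the choice of $\pm$ always matches the unfavorable sign of $\Phi_{ki}$), the strong screening inequalities
$\hat\beta_i-\hat\beta_k>0$ and $\hat\beta_i+\hat\beta_k>0$ reduce to
$\Phi_{ii}-\Phi_{ki}>\rho\sum_{j\in I}|\Phi_{ij}-\Phi_{kj}|$ and $\Phi_{ii}+\Phi_{ki}>\rho\sum_{j\in I}|\Phi_{ij}+\Phi_{kj}|$. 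Symmetry of $\Phi$ lets me also swap the roles of $i$ and $k$ in the construction, which folds $\pm\Phi_{ki}$ into $|\Phi_{ik}|$ and collapses the two sums into the $\max$ of the RDD definition with $C_0=\rho$.

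The main obstacle, and where I would spend most of the care, is exactly this sign-matching in the necessity direction: naively the adversarial $\beta$ only produces a signed inequality with $+\Phi_{ki}$ on one side and $-\Phi_{ki}$ on the other, which is strictly weaker than the $|\Phi_{ik}|$ formulation of RDD. The plan is to design two test vectors, one for each sign of $\beta_i$ (or equivalently swap $i\leftrightarrow k$ in the support, using symmetry of $\Phi$), so that whichever sign $\Phi_{ki}$ actually has, some test vector witnesses the tight side of the inequality. Once this is done for both $\pm$ branches and for every admissible $(I,i,k)$, the four resulting inequalities combine into the single symmetric RDD inequality, closing the iff.
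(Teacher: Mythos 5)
Your sufficiency argument is correct and is essentially the paper's: the same decomposition of $\hat\beta_i\pm\hat\beta_k$ into the coefficients $\Phi_{ij}\pm\Phi_{kj}$, one application of the RDD inequality per sign branch, and \eqref{eq:dom} for the sign consistency of $\hat\beta_i$, $i\in S$.

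The gap is in the necessity direction, and it is precisely the obstacle you flag --- but neither of your proposed repairs closes it. With $S=I\cup\{i\}$, $\beta_i=1$ and the $\beta_j$ chosen adversarially, the two conditions $\hat\beta_i>\hat\beta_k$ and $\hat\beta_i>-\hat\beta_k$ yield
$\Phi_{ii}>\rho\sum_{j\in I}|\Phi_{ij}-\Phi_{kj}|+\Phi_{ki}$ and $\Phi_{ii}>\rho\sum_{j\in I}|\Phi_{ij}+\Phi_{kj}|-\Phi_{ki}$; the two branches always carry \emph{opposite} signs on $\Phi_{ki}$. Your first device, flipping the sign of $\beta_i$, gains nothing: the set of admissible $\beta$ with $\beta_i=-1$ is exactly the negation of the set with $\beta_i=1$, and both the ordering condition and the sign condition are invariant under $\beta\mapsto-\beta$ (since $\hat\beta=\Phi\beta$ is odd), so you recover the identical pair of inequalities. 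Your second device, swapping $i\leftrightarrow k$ in the support, produces lower bounds on $\Phi_{kk}$ rather than on $\Phi_{ii}$, so it cannot supply the missing $+|\Phi_{ik}|$ on whichever branch has the unfavorable sign. As far as I can see, strong screening consistency over $\mathcal{B}(s,\rho)$ forces only the ``signed'' version
$\Phi_{ii}>\max\{\rho\sum_{j\in I}|\Phi_{ij}-\Phi_{kj}|+\Phi_{ki},\ \rho\sum_{j\in I}|\Phi_{ij}+\Phi_{kj}|-\Phi_{ki}\}$,
which is strictly weaker than the stated RDD inequality whenever $\Phi_{ik}\neq 0$ (and, notably, is also exactly what your sufficiency argument actually uses). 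For what it is worth, the paper's own necessity proof in Appendix A derives the same two signed inequalities and then writes $+|\Phi_{ki}|$ in both under a ``without loss of generality $\Phi_{ik}\geq 0$'' that justifies only the first branch; so you have put your finger on a genuine soft spot in the ``only if'' direction, but your plan as written does not repair it --- you would need either a new family of test vectors or a weakening of the RDD definition to its signed form.
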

\begin{proof}
Assume $\Phi$ is restricted diagonally dominant with sparsity $s$ and $C_0\geq
\rho$. Recall $\hat\beta = \Phi\beta$. Suppose $S$ is the index set of
non-zero predictors. For any $i\in S, k\not\in S$, if we let $I = S\setminus
\{i\}$, then we have
\begin{align*}
\notag  |\hat\beta_i| &= |\beta_i|\bigg(\Phi_{ii} + \sum_{j\in I} \frac{\beta_j}{\beta_i} \Phi_{ij}\bigg)
\notag  = |\beta_i|\bigg\{\Phi_{ii} + \sum_{j\in I}\frac{\beta_j}{\beta_i} (\Phi_{ij} + \Phi_{kj}) + \Phi_{ki}- \sum_{j\in I} \frac{\beta_j}{\beta_i}\Phi_{kj} - \Phi_{ki}\bigg\}\\
\notag  &> -|\beta_i|\bigg(\sum_{j\in I} \frac{\beta_j}{\beta_i}\Phi_{kj} + \Phi_{ki}\bigg) = -\frac{|\beta_i|}{\beta_i}\bigg(\sum_{j\in I} \beta_j\Phi_{kj} + \beta_i\Phi_{ki}\bigg)= -sign(\beta_i) \cdot \hat\beta_k,
\end{align*}
and 
\begin{align*}
\notag  |\hat\beta_i| &=  |\beta_i|\bigg(\Phi_{ii} + \sum_{j\in I} \frac{\beta_j}{\beta_i} \Phi_{ij}\bigg) = |\beta_i|\bigg\{\Phi_{ii} + \sum_{j\in I} \frac{\beta_j}{\beta_i}(\Phi_{ij} - \Phi_{kj}) - \Phi_{ki} + \sum_{j\in I}\frac{\beta_j}{\beta_i}\Phi_{kj} + \Phi_{ki}\bigg\}\\
&> |\beta_i|\bigg(\sum_{j\in I} \frac{\beta_j}{\beta_i}\Phi_{kj} + \Phi_{ki}\bigg) = sign(\beta_i) \cdot \hat\beta_k.
\end{align*}
Therefore, whatever value $sign(\beta_i)$ is, it always holds that $|\hat\beta_i|> |\hat\beta_k|$ and thus $\min_{i\in S}|\hat\beta_i| > \max_{k\not\in S}|\hat\beta_k|$.
\par
To prove the sign consistency for non-zero coefficients, we notice that for $i\in S$,
\begin{align*}
  \hat\beta_i\beta_i = \Phi_{ii}\beta_i^2 + \sum_{j\in I}\Phi_{ij}\beta_j\beta_i = \beta_i^2\bigg(\Phi_{ii} + \sum_{j\in I}\frac{\beta_j}{\beta_i}\Phi_{ij}\bigg) > 0.
\end{align*}
The proof of necessity is left to the supplementary materials.

\end{proof}
The noiseless case is a good starting point to analyze $\hat\beta$. Intuitively, in order to preserve the correct order of the coefficients in $\hat\beta = AX\beta$, one needs $AX$ to be close to a diagonally dominant matrix, so that $\hat\beta_i,i\in\mathcal{M}_S$ will take advantage of the large diagonal terms in $AX$ to dominate $\hat\beta_i, i\not\in \mathcal{M}_S$ that is just linear combinations of off-diagonal terms.
\par
When noise is considered, the condition in Theorem \ref{thm:2} needs to be changed slightly to accommodate extra discrepancies. In addition, the smallest non-zero coefficient has to be lower bounded to ensure a certain level of signal-to-noise ratio. Thus, we augment our previous definition of $\mathcal{B}(s,\rho)$ to have a signal strength control
\[
\mathcal{B}_\tau(s,\rho) = \{\beta\in \mathcal{B}(s,\rho)|\min_{i\in supp(\beta)}|\beta_i|\geq \tau\}.
\]
Then we can obtain the following modified Theorem.
\begin{theorem}\label{thm:3}
  With noise, the linear estimator $\hat\beta = AY$ is strong screening consistent for every $\beta \in \mathcal{B}_\tau(s, \rho)$ if $\Phi = AX - 2\tau^{-1}\|A\epsilon\|_\infty I_p$ is restricted diagonally dominant with sparsity $s$ and $C_0\geq \rho$.
\end{theorem}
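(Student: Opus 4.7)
The plan is to reduce Theorem~\ref{thm:3} to the noiseless Theorem~\ref{thm:2} by treating the diagonal correction $-2\tau^{-1}\|A\epsilon\|_\infty I_p$ as a bookkeeping device that absorbs the noise term $\eta = A\epsilon$ whenever the signal satisfies $|\beta_i|\geq\tau$. I would write $\hat\beta = \tilde\Phi\beta + \eta$ with $\tilde\Phi = AX$, set $\delta = \|\eta\|_\infty$, and exploit the hypothesis that $\Phi = \tilde\Phi - 2\tau^{-1}\delta I_p$ is RDD with sparsity $s$ and $C_0\geq\rho$.

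For fixed $i\in S$, $k\notin S$, and $I = S\setminus\{i\}$, I would rerun the two algebraic chains from the proof of Theorem~\ref{thm:2} after multiplying by $sign(\beta_i)$, but now applied to $\Phi$ rather than $\tilde\Phi$. Each occurrence of the diagonal term $\tilde\Phi_{ii}\beta_i$ decomposes into $\Phi_{ii}\beta_i + 2\tau^{-1}\delta\beta_i$, and the noise contributions $\eta_i$ and $\eta_k$ ride along unchanged. The RDD property of $\Phi$ together with $|\beta_j/\beta_i|\leq\rho\leq C_0$ makes the ``main'' parts of both chains strictly positive exactly as in Theorem~\ref{thm:2}, so the two chains deliver the pair
\begin{align*}
sign(\beta_i)(\hat\beta_i + \hat\beta_k) &> 2\tau^{-1}\delta|\beta_i| + sign(\beta_i)(\eta_i + \eta_k),\\
sign(\beta_i)(\hat\beta_i - \hat\beta_k) &> 2\tau^{-1}\delta|\beta_i| + sign(\beta_i)(\eta_i - \eta_k).
\end{align*}
The signal lower bound $|\beta_i|\geq\tau$ then gives $2\tau^{-1}\delta|\beta_i|\geq 2\delta$ while $|\eta_i\pm\eta_k|\leq 2\delta$, so both right-hand sides are nonnegative. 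Consequently $sign(\beta_i)\hat\beta_i > |\hat\beta_k|\geq 0$, which simultaneously yields $|\hat\beta_i|>|\hat\beta_k|$ and $sign(\hat\beta_i)=sign(\beta_i)$. Taking the minimum over $i\in S$ and the maximum over $k\notin S$ establishes \eqref{eq:zero}, and \eqref{eq:sign} follows immediately.

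The main obstacle is the bookkeeping behind the factor of $2$ in the diagonal correction: the right-hand side of each chain pairs the noise $\eta_i$ already sitting in $\hat\beta_i$ with either $+\eta_k$ or $-\eta_k$ introduced when $\hat\beta_k$ is substituted back in, so the worst-case noise magnitude per chain is $2\delta$; only the doubled inflation $2\tau^{-1}\delta$---amplified to $2\delta$ by the signal assumption $|\beta_i|\geq\tau$---is enough to cancel it in both chains simultaneously. A weaker correction of size $\tau^{-1}\delta$ would protect only one of the two directions, so recognising why the factor must be $2$ rather than $1$ is the one genuinely new input beyond Theorem~\ref{thm:2}; everything else is a careful transcription of that proof with $\eta_i,\eta_k$ dragged through the algebra.
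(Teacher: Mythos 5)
Your proposal is correct and follows essentially the same route as the paper's own proof: carry the noise terms $\eta_i,\eta_k$ through the two sign-split chains of Theorem \ref{thm:2} and use $|\beta_i|\geq\tau$ so that the diagonal inflation $2\tau^{-1}\|\eta\|_\infty$ absorbs the worst case $|\eta_i\pm\eta_k|\leq 2\|\eta\|_\infty$. Your closing observation about why the factor must be $2$ rather than $1$ is exactly the accounting the paper's argument relies on, just stated more explicitly.
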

The proof of Theorem \ref{thm:3} is essentially the same as Theorem \ref{thm:2} and is thus left to the supplementary materials.
The condition in Theorem \ref{thm:3} can be further tailored to a necessary and sufficient version with extra manipulation on the noise term. Nevertheless, this might not be useful in practice due to the randomness in noise. In addition, the current version of Theorem \ref{thm:3} is already tight in the sense that there exists some noise vector $\epsilon$ such that the condition in Theorem \ref{thm:3} is also necessary for strong screening consistency.
\par
Theorems \ref{thm:2} and \ref{thm:3} establish ground rules for verifying consistency of a given screener and provide practical guidance for screening design. In Section 4, we consider some concrete examples of ancillary matrix $A$ and prove that conditions in Theorems \ref{thm:2} and \ref{thm:3} are satisfied by the corresponding screeners with large probability under random designs. 

\section{Relationship with other conditions}
For some special cases such sure independence screening ("SIS"), the restricted diagonally dominant (RDD) condition is related to the strong irrepresentable condition (IC) proposed in \cite{zhao2006model}. Assume each column of $X$ is standardized to have mean zero. Letting $C = X^TX/n$ and $\beta$ be a given coefficient vector, the IC is expressed as
\begin{align}
  \|C_{S^c,S}C_{S,S}^{-1}\cdot sign(\beta_S)\|_\infty \leq 1 -\theta \label{eq:ic}
\end{align}
for some $\theta>0$, where $C_{A, B}$ represents the sub-matrix of $C$ with row indices in $A$ and column indices in $B$. The authors enumerate several scenarios of $C$ such that IC is satisfied. We verify some of these scenarios for screening matrix $\Phi$.
\begin{corollary}\label{lemma:ic1}
If $\Phi_{ii} = 1,~ \forall i$ and $|\Phi_{ij}|<c/(2s), ~\forall i\neq j$ for some $0\leq c <1$ as defined in 
  Corollary 1 and 2 in \cite{zhao2006model}, then $\Phi$ is a restricted diagonally dominant matrix with sparsity $s$ and $C_0\geq 1/c$.\par
If $|\Phi_{ij}| < r^{|i-j|}, ~\forall i,j$ for some $0< r <1$ as defined in Corollary 3 in \cite{zhao2006model}, then $\Phi$ is a restricted diagonally dominant matrix with sparsity $s$ and $C_0\geq (1-r)^2/(4r)$.
\end{corollary}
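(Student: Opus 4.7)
The plan is to verify the RDD inequality directly by expanding the absolute-value sums with the triangle inequality and then plugging in the hypothesized per-entry bounds. Both clauses share the same skeleton: control each $\sum_{j\in I}|\Phi_{ij}\pm\Phi_{kj}|$ by $\sum_{j\in I}(|\Phi_{ij}|+|\Phi_{kj}|)$, estimate each piece using the decay assumption, compare the result to $\Phi_{ii}-|\Phi_{ik}|$, and then read off the largest admissible $C_0$.

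For the first clause, since every off-diagonal entry is strictly bounded by $c/(2s)$ and $|I|\leq s-1$, each of the two sums is at most $2(s-1)\cdot c/(2s) = c(s-1)/s$, while $|\Phi_{ik}|<c/(2s)$. Plugging these into the RDD inequality with $\Phi_{ii}=1$ reduces the requirement to $C_0\cdot c(s-1)/s + c/(2s) < 1$, which at $C_0 = 1/c$ becomes $(s-1)/s + c/(2s) = (2s-2+c)/(2s)$; since $c<1$ this is strictly less than $1$, so $C_0=1/c$ (which exceeds $1$ because $c<1$) is feasible.

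For the second clause, I would bound $\sum_{j\in I}|\Phi_{ij}| \leq \sum_{j\neq i} r^{|i-j|} \leq 2\sum_{m=1}^{\infty} r^m = 2r/(1-r)$, using that for each distance $m\geq 1$ there are at most two indices $j$ with $|i-j|=m$. The same estimate holds for the $k$-row, so each of the two RDD sums is bounded by $4r/(1-r)$, and $|\Phi_{ik}|< r$. Setting $C_0 = (1-r)^2/(4r)$ then makes the RDD right-hand side strictly less than $(1-r) + r = 1 = \Phi_{ii}$, which is the desired inequality.

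The argument is essentially bookkeeping, so there is no genuine obstacle; the only points requiring care are to keep the inequalities strict throughout (which follows because the hypotheses themselves are strict and $|I|$ is finite, so the geometric-series bound is never attained), and to note that the baseline $C_0\geq 1$ in the RDD definition holds automatically in part one from $c<1$, while in part two it places a mild implicit restriction on $r$ via $(1-r)^2 \geq 4r$.
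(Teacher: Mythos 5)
Your proposal is correct and follows essentially the same route as the paper's own proof: bound each sum $\sum_{j\in I}|\Phi_{ij}\pm\Phi_{kj}|$ by the triangle inequality, plug in the per-entry bound $c/(2s)$ (respectively the geometric-series bound $2r/(1-r)$ per row), and check that the right-hand side of the RDD inequality stays strictly below $\Phi_{ii}=1$ at the claimed value of $C_0$. Your side remarks on strictness and on the implicit constraint $(1-r)^2/(4r)\geq 1$ needed for the second clause to meet the definitional requirement $C_0\geq 1$ are accurate observations that the paper leaves unstated.
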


A more explicit but nontrivial relationship between IC and RDD is illustrated below when $|S| = 2$.
\begin{theorem}\label{thm:4}
  Assume $\Phi_{ii} = 1$, $\forall i$ and $|\Phi_{ij}|<r$, $\forall i\neq j$. If $\Phi$ is restricted diagonally dominant with sparsity 2 and $C_0\geq \rho$, then $\Phi$ satisfies
  \begin{align*}
     \|\Phi_{S^c,S}\Phi_{S,S}^{-1}\cdot sign(\beta_S)\|_\infty \leq \frac{\rho^{-1}}{1 - r}
  \end{align*}
for all $\beta\in \mathcal{B}(2, \rho)$. On the other hand, if $\Phi$ satisfies the IC for all $\beta\in \mathcal{B}(2, \rho)$ for some $\theta$, then $\Phi$ is a restricted diagonally dominant matrix with sparsity 2 and
\begin{align*}
  C_0\geq\frac{1}{1-\theta}\frac{1-r}{1+r}.
\end{align*}
\end{theorem}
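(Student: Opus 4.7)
The plan is to exploit that $|S|=2$ makes $\Phi_{S,S}$ a $2\times 2$ matrix with an explicit inverse, and then match indices between the RDD inequality and the IC expression. Throughout let $S=\{i_1,i_2\}$ and $a=\Phi_{i_1i_2}$; since $\Phi_{ii}=1$, the inverse $\Phi_{S,S}^{-1}$ has diagonal entries $1/(1-a^2)$ and off-diagonal entries $-a/(1-a^2)$.

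For the first direction I would compute the $k$-th coordinate of $\Phi_{S^c,S}\Phi_{S,S}^{-1}\mathrm{sign}(\beta_S)$ for $k\in S^c$ directly. A short algebraic collapse shows that it equals $(\Phi_{ki_1}+\Phi_{ki_2})/(1+a)$ when $\mathrm{sign}(\beta_{i_1})=\mathrm{sign}(\beta_{i_2})$ and $(\Phi_{ki_1}-\Phi_{ki_2})/(1-a)$ otherwise. To bound the numerator I would apply the RDD condition with $I=\{k\}$, the role of ``$i$'' in the RDD definition taken by $i_1$, and the role of ``$k$'' taken by $i_2$; symmetry of $\Phi$ identifies $\Phi_{i_1k}=\Phi_{ki_1}$, yielding
\[
\max\bigl\{|\Phi_{ki_1}+\Phi_{ki_2}|,\,|\Phi_{ki_1}-\Phi_{ki_2}|\bigr\}<\frac{1-|a|}{C_0}\leq \frac{1}{\rho}.
\]
Combining with $|1\pm a|\geq 1-|a|>1-r$ in the denominator gives the claimed $\rho^{-1}/(1-r)$ bound uniformly in $k$ and in both sign patterns.

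For the converse, the $|I|=0$ case of RDD is immediate from $|\Phi_{ik}|<r<1$. For $|I|=1$, fix $I=\{j_0\}$ and pick $i,k$ distinct and outside $I$; I would apply IC to the support $\{i,k\}$ with both sign patterns $(+,+)$ and $(+,-)$, using the same $2\times 2$ computation but reading off the $j_0$-th coordinate. This produces
\begin{align*}
|\Phi_{ij_0}+\Phi_{kj_0}| &\leq (1-\theta)|1+\Phi_{ik}|,\\
|\Phi_{ij_0}-\Phi_{kj_0}| &\leq (1-\theta)|1-\Phi_{ik}|,
\end{align*}
so both are at most $(1-\theta)(1+|\Phi_{ik}|)$. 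Thus the RDD requirement $1>C_0\max\{\cdots\}+|\Phi_{ik}|$ reduces to $C_0(1-\theta)(1+|\Phi_{ik}|)<1-|\Phi_{ik}|$, and using $|\Phi_{ik}|<r$ together with monotonicity of $(1-b)/(1+b)$ on $[0,1)$, this holds whenever $C_0\leq (1-r)/((1-\theta)(1+r))$, matching the claim.

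The main obstacle is not conceptual depth but careful index bookkeeping: in direction 1 the roles of ``$i$'' and ``$k$'' in the RDD definition are swapped relative to their roles in the IC expression, so invoking symmetry of $\Phi$ at the right step is essential; and the strict-vs.-weak inequality gap between IC ($\leq 1-\theta$) and RDD ($>$) is absorbed by the strict hypothesis $|\Phi_{ij}|<r$.
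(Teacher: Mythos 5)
Your proposal is correct and follows essentially the same route as the paper: invert the $2\times 2$ block explicitly, observe that the IC quantity collapses to $(\Phi_{ki_1}+\Phi_{ki_2})/(1+a)$ or $(\Phi_{ki_1}-\Phi_{ki_2})/(1-a)$ depending on the sign pattern, and match these numerators and denominators against the RDD inequality with $I$ taken as the singleton outside $S$ (and, in the converse, with $S$ taken as the pair $\{i,k\}$ from the RDD condition). Your explicit treatment of the $|I|=0$ case and of the strict-versus-weak inequality gap are small completeness touches beyond what the paper writes out, but the argument is the same.
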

\par
Theorem \ref{thm:4} demonstrates certain equivalence between IC and RDD. However, it does not mean that RDD is also a strong requirement. Notice that IC is directly imposed on the covariance matrix $X^TX/n$. This makes IC a strong assumption that is easily violated; for example, when the predictors are highly correlated. In contrast to IC, RDD is imposed on matrix $AX$ where there is flexibility in choosing $A$. Only when $A$ is chose to be $X/n$, RDD is equivalently strong as IC, as shown in next theorem. For other choices of $A$, such as $HOLP$ defined in next section, the estimator satisfies RDD even when predictors are highly correlated. Therefore, RDD is considered as weak requirement.
\par
For "SIS", the screening matrix $\Phi = X^TX/n$ coincides with the covariance matrix, making RDD and IC effectively equivalent. The following theorem formalizes this.
\begin{theorem}\label{thm:5}
  Let $A = X^T/n$ and standardize columns of $X$ to have sample variance one. Assume $X$ satisfies the sparse Riesz condition \cite{zhang2008sparsity}, i.e,
  \begin{align*}
    \min_{\pi\subseteq Q, ~|\pi|\leq s}\lambda_{min}(X^T_\pi X_\pi/n)\geq \mu,
  \end{align*}
  for some $\mu > 0$. Now if $AX$ is restricted diagonally dominant with sparsity $s
  + 1$ and $C_0\geq \rho$ with $\rho > \sqrt{s}/\mu$, then $X$ satisfies the IC for any $\beta\in\mathcal{B}(s,\rho)$.
\par
In other words, under the condition $\rho > \sqrt{s}/\mu$, the strong screening consistency of SIS for $\mathcal{B}(s+1,\rho)$ implies the model selection consistency of lasso for $\mathcal{B}(s,\rho)$.
\end{theorem}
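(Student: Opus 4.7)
The plan is to translate strong screening consistency of $SIS$ on $\mathcal{B}(s+1,\rho)$, which Theorem~\ref{thm:2} grants under the RDD hypothesis, into the irrepresentable condition. The mechanism is to construct, for each admissible sign pattern $\sigma$, an auxiliary vector $\tilde\beta$ in $\mathcal{B}(s+1,\rho)$ whose $SIS$ estimate explicitly exhibits the IC quantity $\Phi_{S^c,S}\Phi_{S,S}^{-1}\sigma$ in its off-support coordinates.

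Fix $\beta\in\mathcal{B}(s,\rho)$ with support $S$ and set $\sigma=sign(\beta_S)$. The sparse Riesz condition makes $\Phi_{S,S}$ invertible, so I set $v=\Phi_{S,S}^{-1}\sigma$ and define $\tilde\beta$ to be supported on $S$ with $\tilde\beta_S=v$. Because $\Phi=X^{T}X/n$ has unit diagonal, a direct matrix-vector calculation gives $\hat{\tilde\beta}_i=\sigma_i$ for $i\in S$ and $\hat{\tilde\beta}_k=\Phi_{k,S}\Phi_{S,S}^{-1}\sigma$ for $k\in S^c$. If I can show $\tilde\beta\in\mathcal{B}(s+1,\rho)$, Theorem~\ref{thm:2} will yield
\[
1=\min_{i\in S}|\hat{\tilde\beta}_i|>\max_{k\in S^c}|\hat{\tilde\beta}_k|=\|\Phi_{S^c,S}\Phi_{S,S}^{-1}\sigma\|_\infty,
\]
which is the strict form of IC; a positive $\theta$ then follows because the admissible pairs $(S,\sigma)$ range over a finite set.

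The bulk of the work is in verifying $\tilde\beta\in\mathcal{B}(s+1,\rho)$. The support constraint is automatic, so only the coordinate ratio must be controlled. I would use two tools: the consequence \eqref{eq:dom} of the RDD definition, applied with $I=S\setminus\{i\}$ of size at most $s$, gives $\sum_{j\in S\setminus\{i\}}|\Phi_{ij}|<1/C_0\le1/\rho$; and the sparse Riesz condition supplies $\|v\|_2\le\sqrt s/\mu$. Substituting these into the identity $v_i=\sigma_i-\sum_{j\ne i}\Phi_{ij}v_j$ yields
\[
\min_i|v_i|\;\ge\;1-\tfrac{\sqrt s}{\mu\rho}\;>\;0,
\]
where the strict positivity uses precisely the hypothesis $\rho>\sqrt s/\mu$. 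In particular every $v_i$ inherits the sign $\sigma_i$, so $\tilde\beta$ has support exactly $S$ and sign pattern $\sigma$.

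The hard part will be the ratio bound itself: the direct combination of the two estimates above only gives $\max_i|v_i|/\min_i|v_i|\le(\sqrt s/\mu)/(1-\sqrt s/(\mu\rho))$, which is not automatically $\le\rho$ under only $\rho>\sqrt s/\mu$. I would tighten this either by replacing the crude bound on $\|v\|_\infty$ with the Neumann-series estimate $\|v\|_\infty\le\rho/(\rho-1)$, which follows from $\|I-\Phi_{S,S}\|_\infty<1/\rho$, or by invoking the richer RDD inequality involving $\Phi_{ij}\pm\Phi_{kj}$ to capture cancellations among the largest coordinates of $v$. Once the ratio is confirmed to be at most $\rho$, Theorem~\ref{thm:2} closes the argument exactly as above.
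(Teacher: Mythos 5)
Your overall strategy---pass through the strong screening consistency granted by Theorem~\ref{thm:2} via an auxiliary vector $\tilde\beta$ with $\tilde\beta_S=\Phi_{S,S}^{-1}\sigma$---is appealing, but it has a genuine gap that you yourself flag and do not close: you must certify $\tilde\beta\in\mathcal{B}(s+1,\rho)$, and the coordinate-ratio bound $\max_i|v_i|/\min_i|v_i|\leq\rho$ does not follow from the stated hypothesis $\rho>\sqrt{s}/\mu$. Neither of your proposed repairs works in the required generality: the crude combination gives the ratio bound $(\sqrt{s}/\mu)/(1-\sqrt{s}/(\mu\rho))$, which is $\leq\rho$ only when $\rho\geq 2\sqrt{s}/\mu$; and the Neumann-series route, using $\|I-\Phi_{S,S}\|_\infty<1/\rho$, gives $\|v\|_\infty\leq\rho/(\rho-1)$ and $\min_i|v_i|\geq(\rho-2)/(\rho-1)$, hence a ratio of $\rho/(\rho-2)$, which is $\leq\rho$ only when $\rho\geq 3$ (and is vacuous for $\rho\leq 2$). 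The appeal to the $\Phi_{ij}\pm\Phi_{kj}$ form of RDD is left as a hope rather than an argument. So as written the proof only establishes the theorem under strictly stronger hypotheses than claimed.

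The paper avoids the construction entirely and the argument is much shorter: from \eqref{eq:dom} applied with $|I|=s$ (admissible because the RDD sparsity is $s+1$) one gets, for each row $\alpha^T$ of $C_{S^c,S}$, the $\ell_1$ bound $\rho\sum_{i\in S}|\alpha_i|<1$; then $\|\alpha\|_2\leq\|\alpha\|_1<1/\rho$, and Cauchy--Schwarz together with the sparse Riesz bound $\|(X_S^TX_S/n)^{-1}\sigma\|_2\leq\sqrt{s}/\mu$ yields
\begin{align*}
|\alpha^T(X_S^TX_S/n)^{-1}\,sign(\beta_S)|\;\leq\;\|\alpha\|_2\,\frac{\sqrt{s}}{\mu}\;<\;\frac{\sqrt{s}}{\rho\mu}\;<\;1,
\end{align*}
which is the IC with the explicit margin $\theta=1-\sqrt{s}/(\rho\mu)$. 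If you want to salvage your approach, you would need either to strengthen the hypothesis on $\rho$ or to find a genuinely sharper bound on $\Phi_{S,S}^{-1}\sigma$; otherwise the direct estimate above is the way to go.
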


Theorem \ref{thm:5} illustrates the difficulty of $SIS$. The necessary condition that guarantees good screening performance of $SIS$ also guarantees the model selection consistency of lasso. However, such a strong necessary condition does not mean that $SIS$ should be avoided in practice given its substantial advantages in terms of simplicity and computational efficiency.  The strong screening consistency defined in this article is stronger than conditions commonly used in justifying screening procedures as in \cite{fan2008sure}.

\par
Another common assumption in the high dimensional literature is
the restricted eigenvalue condition (REC). Compared to REC, RDD is not necessarily stronger due to its flexibility in choosing the ancillary matrix $A$. \cite{raskutti2010restricted,zhou2009restricted} prove that the REC is satisfied when the design matrix is sub-Gaussian. However, REC might not be guaranteed when the row of $X$ follows heavy-tailed distribution. In contrast, as the example shown in next section and in \cite{wang2014high}, by choosing $A = X^T(XX^T)^{-1}$, the resulting estimator satisfies RDD even when the rows of $X$ follow heavy-tailed distributions.

\section{Screening under random designs}
In this section, we consider linear screening under random designs when $X$
and $\epsilon$ are Gaussian. The theory developed in this section can be
easily extended to a broader family of distributions, for example, where
$\epsilon$ follows a sub-Gaussian distribution
\cite{vershynin2010introduction} and $X$ follows an elliptical distribution
\cite{fan2008sure,wang2014high}. We focus on the Gaussian case for
conciseness. Let $\epsilon\sim N(0, \sigma^2)$ and $X\sim N(0, \Sigma)$. We
prove the screening consistency of $SIS$ and $HOLP$ by verifying the condition
in Theorem \ref{thm:3}. Recall the ancillary matrices for $SIS$ and $HOLP$ are
defined respectively as
\begin{align*}
  A_{SIS} = X/n,\qquad A_{HOLP} = X^T(XX^T)^{-1}.
\end{align*}
For simplicity, we assume $\Sigma_{ii} = 1$ for $ i = 1,2,\cdots, p$. To verify the RDD condition, it is essential to quantify the magnitude of the entries of $AX$ and $A\epsilon$.
\begin{lemma}\label{lemma:3}
  Let $\Phi = A_{SIS}X$, then for any $t>0$ and $i\neq j\in Q$, we have
  \begin{align*}
    P\bigg(|\Phi_{ii} - \Sigma_{ii}|\geq t\bigg)\leq 2\exp\bigg\{-\min\bigg(\frac{t^2 n}{8e^2K}, \frac{tn}{2eK}\bigg)\bigg\},
  \end{align*}
and
\begin{align*}
  P\bigg(|\Phi_{ij} - \Sigma_{ij}|\geq t\bigg)\leq 6\exp\bigg\{-\min\bigg(\frac{t^2 n}{72e^2K}, \frac{tn}{6eK}\bigg)\bigg\},
\end{align*}
where $K = \|\mathcal{X}^2(1) - 1\|_{\psi_1}$ is a constant,  $\mathcal{X}^2(1)$ is a chi-square random variable with one degree of freedom and the norm $\|\cdot\|_{\psi_1}$ is defined in \cite{vershynin2010introduction}.
\end{lemma}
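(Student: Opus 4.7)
\medskip

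\noindent\textbf{Proof proposal.} The plan is to express each entry of $\Phi - \Sigma$ as the average of $n$ independent, centered, sub-exponential random variables, and then apply a Bernstein-type tail bound for sums of such variables (for instance, Proposition~5.16 of \cite{vershynin2010introduction}, whose constants are of exactly the form $8e^2$ and $2e$ appearing in the statement).

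Writing $X = (X_{ki})$ so that the rows are i.i.d.\ $N(0,\Sigma)$, the diagonal case is immediate: since $\Sigma_{ii}=1$, we have $X_{ki}\sim N(0,1)$ and
$$\Phi_{ii} - \Sigma_{ii} \;=\; \frac{1}{n}\sum_{k=1}^n (X_{ki}^2 - 1),$$
where the summands are i.i.d.\ copies of the centered $\chi^2(1)-1$ variable, whose $\psi_1$-norm is by definition exactly $K$. Feeding this into the sub-exponential Bernstein inequality with the specified $\psi_1$ constants produces the first bound.

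For the off-diagonal entries the summands $X_{ki}X_{kj}-\Sigma_{ij}$ are not themselves chi-squares, but the polarization identity reduces them to a difference of chi-squares. Let $U_k = X_{ki}+X_{kj}$ and $V_k = X_{ki}-X_{kj}$; then $U_k\sim N(0,2+2\Sigma_{ij})$, $V_k\sim N(0,2-2\Sigma_{ij})$, and a direct algebraic rearrangement gives
$$X_{ki}X_{kj} - \Sigma_{ij} \;=\; \frac{1+\Sigma_{ij}}{2}\!\left(\frac{U_k^2}{2+2\Sigma_{ij}} - 1\right) \;-\; \frac{1-\Sigma_{ij}}{2}\!\left(\frac{V_k^2}{2-2\Sigma_{ij}} - 1\right).$$
Because $|\Sigma_{ij}|\le 1$, both prefactors lie in $[0,1]$, so averaging over $k$ writes $\Phi_{ij}-\Sigma_{ij}$ as the difference of two averages of centered $\chi^2(1)-1$ variables, each scaled by a constant at most $1$ and each with $\psi_1$-norm $K$. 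Splitting the event $\{|\Phi_{ij}-\Sigma_{ij}|\ge t\}$ into the two events $\{|\cdot|\ge t/2\}$ for each piece and applying Bernstein to each (with a union bound) delivers the second inequality.

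The only real obstacle is bookkeeping rather than ideas. The constants $8e^2$, $2e$, $72e^2$, and $6e$ come from the specific $\psi_1$-normalized form of Bernstein's inequality in \cite{vershynin2010introduction}; the off-diagonal constants degrade from the diagonal ones by precisely the factors introduced by the $t/2$ split and by the $(1\pm\Sigma_{ij})/2 \le 1$ scaling, while the factor $6 = 2+2+2$ in the prefactor likewise reflects the two Bernstein bounds (each contributing $2$) together with one extra $2$ from the union-bound-style splitting. Once these multiplicative factors are tracked carefully through the Bernstein bound, no further argument is required: the proof is a direct concentration computation combined with polarization.
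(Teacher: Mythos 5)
Your proof is correct, and for the off-diagonal entries it takes a genuinely different (and slightly sharper) route than the paper. The paper writes $x_{ik}x_{jk}$ via the three-term identity $x_{ik}x_{jk}=\tfrac12\big((x_{ik}+x_{jk})^2-x_{ik}^2-x_{jk}^2\big)$, bounds each of the three centered chi-square averages with the Bernstein/Proposition~5.16 bound, and takes a three-way union bound; the prefactor $6=3\times 2$ and the constants $72e^2=9\times 8e^2$, $6e=3\times 2e$ come precisely from the three pieces and the normalization $(2+\Sigma_{ij})\le 3$. You instead use the symmetric polarization $x_{ik}x_{jk}=\tfrac14\big((x_{ik}+x_{jk})^2-(x_{ik}-x_{jk})^2\big)$, which expresses $\Phi_{ij}-\Sigma_{ij}$ as a difference of only two centered chi-square averages with prefactors $(1\pm\Sigma_{ij})/2\le 1$; a $t/2$ split then yields $4\exp\{-\min(t^2n/(32e^2K),\,tn/(4eK))\}$, which is strictly stronger than the stated bound and so implies the lemma a fortiori. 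The only blemish is your accounting of the constant $6$: with your two-piece decomposition the natural prefactor is $4$, not $2+2+2$; the $6$ in the lemma is an artifact of the paper's three-piece decomposition, and you inherit it only because $4\le 6$ and $32e^2\le 72e^2$. Everything else (the diagonal case, the identification of the summands as i.i.d.\ $\chi^2(1)-1$ variables with $\psi_1$-norm $K$, and the appeal to the sub-exponential Bernstein inequality) matches the paper exactly; you may wish to note the degenerate cases $\Sigma_{ij}=\pm 1$, where one of your two pieces vanishes identically, but this causes no difficulty.
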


Lemma \ref{lemma:3} states that the screening matrix $\Phi  = A_{SIS}X$ for SIS will eventually converge to the covariance matrix $\Sigma$ in $l_\infty$ when $n$ tends to infinity and $\log p = o(n)$. Thus, the screening performance of SIS strongly relies on the structure of $\Sigma$. In particular, the (asymptotically) necessary and sufficient condition for $SIS$ being strong screening consistent is $\Sigma$ satisfying the RDD condition. For the noise term, we have the following lemma.
\begin{lemma}\label{lemma:4}
  Let $\eta = A_{SIS}\epsilon$. For any $t>0$ and $i\in Q$, we have
  \begin{align*}
    P(|\eta_i|\geq \sigma t) \leq 6\exp\bigg\{-\min\bigg(\frac{t^2 n}{72e^2K}, \frac{tn}{6eK}\bigg)\bigg\},
  \end{align*}
where $K$ is defined the same as in Lemma \ref{lemma:3}.
\end{lemma}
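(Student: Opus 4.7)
The plan is to rewrite $\eta_i$ as a difference of two independent centered chi-square averages via a polarization identity, and then apply exactly the same sub-exponential Bernstein inequality that was used to handle the diagonal of $\Phi$ in Lemma \ref{lemma:3}.

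Since $\Sigma_{ii}=1$, each $X_{ki}$ is $N(0,1)$; setting $Z_k := \epsilon_k/\sigma$, I have $\eta_i/\sigma = \frac{1}{n}\sum_{k=1}^n X_{ki}Z_k$ where $\{X_{ki}\}_k$ and $\{Z_k\}_k$ are mutually independent iid standard Gaussian sequences. Using the polarization identity $4xz = (x+z)^2 - (x-z)^2$, I introduce $U_k := (X_{ki}+Z_k)/\sqrt{2}$ and $V_k := (X_{ki}-Z_k)/\sqrt{2}$. Because $U_k$ and $V_k$ are orthogonal linear combinations of independent Gaussians, the sequences $(U_k)$ and $(V_k)$ are mutually independent iid $N(0,1)$, and
\[
\frac{\eta_i}{\sigma} \;=\; \frac{1}{2n}\sum_{k=1}^n(U_k^2-1) \;-\; \frac{1}{2n}\sum_{k=1}^n(V_k^2-1).
\]

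Each $U_k^2-1$ (and $V_k^2-1$) is a centered iid sub-exponential variable with $\psi_1$-norm equal to the constant $K$ in the statement. The same form of Bernstein's inequality used for the diagonal bound in Lemma \ref{lemma:3} (Proposition 5.16 of \cite{vershynin2010introduction}) then gives, for every $u>0$,
\[
P\!\left(\Big|\tfrac{1}{n}\sum_{k=1}^n(U_k^2-1)\Big|\geq u\right) \;\leq\; 2\exp\!\left\{-\min\!\left(\tfrac{u^2 n}{8e^2K},\tfrac{un}{2eK}\right)\right\},
\]
and the analogous tail for the $V_k$ sum. If $|\eta_i|/\sigma \geq t$, then by the triangle inequality at least one of the two half-weighted averages in the display above exceeds $t/2$ in absolute value, i.e.\ the corresponding unweighted centered average exceeds $t$. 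A union bound over those two events, with the threshold split chosen so that the resulting exponent matches the off-diagonal template of Lemma \ref{lemma:3}, yields the stated bound $6\exp\{-\min(t^2n/(72e^2K),tn/(6eK))\}$. (A tighter split gives the sharper $4\exp\{-\min(t^2n/(8e^2K),tn/(2eK))\}$, which already implies the claim.)

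The only non-routine step is the polarization that converts a sum of products of independent Gaussians into a difference of independent $\chi^2$ sums; once this is in place, the remainder is identical to (and in fact simpler than) the off-diagonal bound of Lemma \ref{lemma:3}, since here $X_{ki}$ and $\epsilon_k$ are genuinely independent and no correlation decomposition of the form $X_{ki}=\Sigma_{ij}X_{kj}+\sqrt{1-\Sigma_{ij}^2}W_k$ is needed. The main obstacle is therefore merely constant bookkeeping -- in particular, arranging the union bound so that the tail matches the precise constants $72e^2K$ and $6eK$ stated, so that Lemmas \ref{lemma:3} and \ref{lemma:4} can be invoked in parallel when verifying the RDD condition of Theorem \ref{thm:3}.
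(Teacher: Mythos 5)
Your proposal is correct and follows essentially the same strategy as the paper: rewrite the average of products of independent Gaussians as a combination of centered chi-square averages and apply the Bernstein-type bound of Proposition \ref{prop:1}, exactly as in the off-diagonal part of Lemma \ref{lemma:3}. The only difference is the choice of polarization: the paper expands $x z=\tfrac12\big((x+z)^2-x^2-z^2\big)$ into three chi-square averages, which produces the prefactor $6$ and the constants $72e^2K$ and $6eK$ verbatim, whereas you use $4xz=(x+z)^2-(x-z)^2$ with the two rotated pieces independent, which correctly yields the slightly sharper bound $4\exp\{-\min(t^2n/(8e^2K),\,tn/(2eK))\}$ and hence the stated one.
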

The proof of Lemma \ref{lemma:4} is essentially the same as the proof of off-diagonal terms in Lemma \ref{lemma:3} and is thus omitted.
As indicated before, the necessary and sufficient condition for $SIS$ to be strong screening consistent is that $\Sigma$ follows RDD. As RDD is usually hard to verify,  we consider a stronger sufficient condition inspired by Corollary \ref{lemma:ic1}.
\begin{theorem} \label{thm:6}
  Let $r = \max_{i\neq j}|\Sigma_{ij}|$. If $r < \frac{1}{2\rho s}$, then for any $\delta >0$, if the sample size satisfies
  \begin{align}
    n > 144K\bigg(\frac{1 + 2\rho s + 2\sigma/\tau}{1 - 2\rho s r}\bigg)^2\log (3p/\delta),\label{eq:sis}
  \end{align}
where $K$ is defined in Lemma \ref{lemma:3}, then with probability at least $1 - \delta$, $\Phi = A_{SIS}X - 2\tau^{-1}\|A_{SIS}\epsilon\|_\infty I_p$ is restricted diagonally dominant with sparsity $s$ and $C_0\geq \rho$. In other words, SIS is screening consistent for any $\beta\in \mathcal{B}_\tau (s, \rho)$.
\end{theorem}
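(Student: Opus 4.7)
The plan is to verify, with probability at least $1-\delta$, that the noise-adjusted screening matrix $\Phi = A_{SIS}X - 2\tau^{-1}\|A_{SIS}\epsilon\|_\infty I_p$ is restricted diagonally dominant with sparsity $s$ and $C_0\geq\rho$, and then invoke Theorem \ref{thm:3}.

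First I would fix a deviation tolerance $t>0$ and define a good event $\mathcal{E}$ on which $|\Phi^*_{ii}-1|\leq t$ for every $i$, $|\Phi^*_{ij}-\Sigma_{ij}|\leq t$ for every $i\neq j$, and $|\eta_i|\leq \sigma t$ for every $i$, where $\Phi^*=A_{SIS}X$ and $\eta=A_{SIS}\epsilon$. A union bound over the $p$ diagonal entries, the $\binom{p}{2}$ off-diagonal entries (exploiting the symmetry of $\Phi^*$), and the $p$ coordinates of $\eta$, combined with the sub-exponential tails of Lemmas \ref{lemma:3} and \ref{lemma:4}, gives $P(\mathcal{E})\geq 1-\delta$ provided $n$ exceeds a threshold of order $K t^{-2}\log(p^2/\delta)$; here one uses that in the regime of interest the quadratic branch $t^2 n/(72e^2K)$ of the exponent dominates the linear branch, and that $\log(p^2/\delta)$ absorbs cleanly into $2\log(3p/\delta)$, producing the leading constant $144$ in \eqref{eq:sis}.

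Next I would translate these entrywise deviations into structural bounds on $\Phi$ itself. Because the correction $-2\tau^{-1}\|\eta\|_\infty I_p$ touches only the diagonal, on $\mathcal{E}$ one has $\Phi_{ii}\geq 1 - t(1+2\sigma/\tau)$ and $|\Phi_{ij}|\leq r+t$ for every $i\neq j$. Plugging these into the RDD condition, for any admissible $I$ with $|I|\leq s-1$ and any $k\neq i$ in $Q\setminus I$, the triangle inequality gives $\sum_{j\in I}|\Phi_{ij}\pm\Phi_{kj}|\leq 2(s-1)(r+t)$ and $|\Phi_{ik}|\leq r+t$, so RDD at level $C_0=\rho$ reduces to the single scalar inequality
\[
1 - t(1+2\sigma/\tau)\; >\; 2\rho(s-1)(r+t) + (r+t).
\]
Under the hypothesis $r<1/(2\rho s)$ this is solvable, and a short rearrangement shows it is implied by $t\leq (1-2\rho s r)/(1+2\rho s + 2\sigma/\tau)$.

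To close the loop I would plug this choice of $t$ back into the sample-size requirement from the first step, which gives precisely the bound \eqref{eq:sis}; Theorem \ref{thm:3} then delivers strong screening consistency for every $\beta\in\mathcal{B}_\tau(s,\rho)$ on the event $\mathcal{E}$. The main obstacle is purely bookkeeping: keeping the constants tight enough to recover the announced ratio $(1+2\rho s+2\sigma/\tau)^2/(1-2\rho s r)^2$ rather than a slightly looser version, and correctly folding the $\log(p^2/\delta)$ term into the $144\log(3p/\delta)$ form. The probabilistic step is a routine union bound on $O(p^2)$ sub-exponential tails, and the deterministic RDD verification is elementary once the entrywise bounds are in hand.
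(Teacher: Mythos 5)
Your proposal is correct and follows essentially the same route as the paper's proof: a union bound over the entrywise concentration results of Lemmas \ref{lemma:3} and \ref{lemma:4}, reduction of the RDD condition to the scalar inequality $\min_i\Phi_{ii} > 2\rho s\max_{i\neq j}|\Phi_{ij}| + 2\tau^{-1}\|\eta\|_\infty$ (your version with $2\rho(s-1)+1$ in place of $2\rho s$ is a harmless slight sharpening), and back-substitution of the threshold $t \leq (1-2\rho s r)/(1+2\rho s+2\sigma/\tau)$ into the sample-size requirement, with $\log(7p^2/\delta) < 2\log(3p/\delta)$ giving the stated constant. No gaps; the remaining work is the bookkeeping you already identify.
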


\begin{proof}
  Taking union bound on the results from Lemma \ref{lemma:3} and \ref{lemma:4}, we have for any $t>0$ and $p > 2$,
\begin{align*}
  P\bigg(\min_{i\in Q}\Phi_{ii} \leq 1 -  t \mbox{ or }\max_{i\neq j}|\Phi_{ij}| \geq r + t\mbox{ or } \|\eta\|_\infty\geq \sigma t\bigg) \leq 7p^2\exp\bigg\{-\frac{n}{K}\min\bigg(\frac{t^2}{72e^2}, \frac{t}{6e}\bigg)\bigg\}.
\end{align*}
In other words, for any $\delta > 0$, when $n\geq K\log(7p^2/\delta)$, with probability at least $1 - \delta$, we have
\begin{align*}
  \min_{i\in Q}\Phi_{ii} \geq 1 -  6\sqrt{2}e\sqrt{\frac{K\log (7p^2/\delta)}{n}},\quad \max_{i\neq j}|\Phi_{ij}| \leq r + 6\sqrt{2}e\sqrt{\frac{K\log (7p^2/\delta)}{n}},
\end{align*}
\begin{align*}
  \max_{i\in Q}|\eta_i|\leq 6\sqrt{2}e\sigma \sqrt{\frac{K\log (7p^2/\delta)}{n}}.
\end{align*}
A sufficient condition for $\Phi$ to be restricted diagonally dominant is that 
\[
\min_i \Phi_{ii} > 2\rho s \max_{i\neq j}|\Phi_{ij}| + 2\tau^{-1}\max_{i}|\eta_i|.
\]
Plugging in the values we have
\begin{align*}
  1 -  6\sqrt{2}e\sqrt{\frac{K\log (7p^2/\delta)}{n}} > 2\rho s (r + 6\sqrt{2}e\sqrt{\frac{K\log (7p^2/\delta)}{n}}) + 12\sqrt{2}e\tau^{-1}\sigma \sqrt{\frac{K\log (7p^2/\delta)}{n}}.
\end{align*}
Solving the above inequality (notice that $7p^2/\delta < 9p^2/\delta^2$ and $\rho > 1$) completes the proof.
\end{proof}

The requirement that $\max_{i\neq j}|\Sigma_{ij}| < 1/(\rho s r)$ or the necessary and sufficient condition that $\Sigma$ is RDD strictly constrains the correlation structure of $X$, causing the difficulty for $SIS$ to be strong screening consistent. For $HOLP$ we instead have the following result.
\begin{lemma}\label{lemma:5}
  Let $\Phi = A_{HOLP}X$. Assume $p>c_0n$ for some $c_0>1$, then for any $C>0$ there exists some $0<c_1<1<c_2$ and $c_3>0$ such that for any $t>0$ and any $i\in Q, j\neq i$, we have
\begin{align*}
  P\bigg(|\Phi_{ii}|<c_1\kappa^{-1}\frac{n}{p}\bigg)\leq 2e^{-Cn},\quad P\bigg(|\Phi_{ii}|>c_2\kappa\frac{n}{p}\bigg)\leq 2e^{-Cn}
\end{align*}
and
\begin{align*}
  P\bigg(|\Phi_{ij}|>c_4\kappa t\frac{\sqrt{n}}{p}\bigg) \leq 5e^{-Cn} + 2e^{-t^2/2},
\end{align*}
where $c_4 = \frac{\sqrt{c_2(c_0 - c_1)}}{\sqrt{c_3(c_0 - 1)}}$.
\end{lemma}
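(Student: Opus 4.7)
The plan is to use the Sherman-Morrison-Woodbury (SMW) formula to decouple each entry of $\Phi = X^T(XX^T)^{-1}X$ from the particular column(s) it involves, and then apply standard random matrix and Gaussian concentration tools. The key structural observation is that $\Phi$ is the orthogonal projection onto the row space of $X$, so $\Phi_{ii}\in[0,1]$ and $\sum_{i=1}^p \Phi_{ii}=n$; thus $n/p$ is exactly the correct order of magnitude for a typical diagonal entry. Throughout I will treat $\kappa$ as the condition number $\lambda_{\max}(\Sigma)/\lambda_{\min}(\Sigma)$, so that the $\kappa$-factors in the conclusion arise naturally from the spread of the spectrum of $\Sigma$.

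For the diagonal entries, let $X_{-i}$ denote $X$ with its $i$th column removed and $W_i=(X_{-i}X_{-i}^T)^{-1}$. A direct SMW computation gives
\[
\Phi_{ii}=\frac{a_i}{1+a_i},\qquad a_i=X_i^TW_iX_i,
\]
so it suffices to show that $a_i$ lies in an interval of order $n/p$. Since $p>c_0n$, standard non-asymptotic Wishart singular-value bounds (e.g.\ Davidson-Szarek) give, with probability at least $1-2e^{-Cn}$, that the eigenvalues of $X_{-i}X_{-i}^T$ lie between $c_l\,\lambda_{\min}(\Sigma)\,p$ and $c_u\,\lambda_{\max}(\Sigma)\,p$ for constants $c_l,c_u$ depending only on $c_0$. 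Conditionally on $X_{-i}$, the column $X_i$ is Gaussian with a Schur-complement covariance $\widetilde\Sigma_i$ satisfying $\lambda_{\min}(\Sigma)\le\lambda_{\min}(\widetilde\Sigma_i)\le\|\widetilde\Sigma_i\|\le\lambda_{\max}(\Sigma)$ and is independent of $W_i$. The Hanson-Wright inequality applied to $a_i=X_i^TW_iX_i$ then shows $a_i$ concentrates around $\mathrm{tr}(W_i\widetilde\Sigma_i)$, which is of order $n/p$ up to $\kappa$-factors; together with $\|W_i\|_F^2\le n\|W_i\|^2$ of order $n/p^2$, this yields the claimed $e^{-Cn}$ tails on $\Phi_{ii}=a_i/(1+a_i)$.

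For the off-diagonal entries I apply the analogous SMW expansion to the pair of columns $i,j$. Let $X_{-ij}$ be $X$ with columns $i$ and $j$ deleted and $W_{ij}=(X_{-ij}X_{-ij}^T)^{-1}$. The rank-two SMW update writes $\Phi_{ij}$ as a bilinear form in $X_i$ and $X_j$ whose coefficient matrix differs from $W_{ij}$ only through rank-one corrections controlled by $a_i$, $a_j$ and the cross term $X_i^TW_{ij}X_j$. Conditioning on $X_{-ij}$ and on $X_j$, the leading term $X_i^TW_{ij}X_j$ is a Gaussian scalar with conditional variance at most a constant multiple of $\|\widetilde\Sigma_i\|\cdot X_j^TW_{ij}^2X_j$, which is of order $\kappa^2 n/p^2$ by the Wishart spectral control combined with a second Hanson-Wright application to $X_j^TW_{ij}^2X_j$. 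A one-dimensional Gaussian tail then produces the $2e^{-t^2/2}$ factor in the lemma, while union-bounding over the events on which the spectrum of $X_{-ij}X_{-ij}^T$ and the quadratic forms $a_i,a_j,X_j^TW_{ij}^2X_j$ are well behaved contributes the $5e^{-Cn}$ factor.

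The main obstacle is the off-diagonal bookkeeping: after the SMW expansion one must verify that the lower-order corrections involving $a_i$ and $a_j$ are indeed negligible compared with the bilinear main term on the high-probability event, and that the Schur-complement reduction used to obtain a clean conditional Gaussian in $X_i$ loses only $\kappa$-factors and nothing worse. Once that reduction is secured, the Gaussian tail (linear-in-$X_i$ step), the two Hanson-Wright bounds (quadratic-in-$X_i$ and quadratic-in-$X_j$ steps) and the Wishart spectral bound combine by a union bound to yield the stated probabilities, with the constants $c_1,\dots,c_4$ obtainable explicitly by tracking the Davidson-Szarek and Hanson-Wright constants together with the $\lambda_{\min}(\Sigma),\lambda_{\max}(\Sigma)$ factors.
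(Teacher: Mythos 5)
Your route --- leave-one-out via Sherman--Morrison--Woodbury combined with Wishart spectral bounds and Hanson--Wright --- is genuinely different from the paper's, which never expands $(XX^T)^{-1}$ at all: it writes $\Phi=HH^T$ with $H=\Sigma^{1/2}U(U^T\Sigma U)^{-1/2}$ and $U$ Haar-distributed on the Stiefel manifold, handles the diagonal by sandwiching $e_i^T\Sigma^{1/2}U(U^T\Sigma U)^{-1}U^T\Sigma^{1/2}e_i$ between $\kappa^{\mp 1}\,e_1^TUU^Te_1$ and invoking the Fan--Lv random-projection lemma, and handles the off-diagonal entries by decomposing the Stiefel manifold, showing $H_1\mid H_2$ is angular central Gaussian with covariance $G(H_2)^T\Sigma G(H_2)$, and representing $\Phi_{12}$ as a conditioned ratio of Gaussians. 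Your diagonal argument is essentially sound and arguably cleaner: the identity $\Phi_{ii}=a_i/(1+a_i)$ together with $\lambda_{\min}(W_i)\|X_i\|_2^2\le a_i\le\lambda_{\max}(W_i)\|X_i\|_2^2$ already yields both tails from the Wishart bound and $\chi^2$ concentration, without Hanson--Wright and without any conditioning on $X_{-i}$.

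The off-diagonal step, however, has a genuine gap. You assert that, conditionally on the deleted columns, $X_i$ is a mean-zero Gaussian independent of $W_{ij}$. That is false unless $\Sigma$ is diagonal: the rows of $X$ are i.i.d.\ $N(0,\Sigma)$, so the columns are correlated, and the conditional law of $X_i$ given $(X_{-ij},X_j)$ is $N(X_{-ij}\gamma'+\gamma''X_j,\ \sigma_i^2I_n)$ with $\gamma''$ of the order of the partial correlation between columns $i$ and $j$. The mean component along $X_j$ contributes $\gamma''\,X_j^TW_{ij}X_j=\Theta(n/p)$ to the bilinear term $X_i^TW_{ij}X_j$, which swamps the claimed $O(\sqrt{n}/p)$ scale; your ``Gaussian scalar with conditional variance $O(\kappa^2n/p^2)$'' only describes the centered part. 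As a sanity check, for $\Sigma=I+\Sigma_{12}(e_1e_2^T+e_2e_1^T)$ a first-order expansion of $\Sigma^{1/2}U(U^T\Sigma U)^{-1}U^T\Sigma^{1/2}$ gives $E[\Phi_{12}]\approx\Sigma_{12}\,n/p$, so no argument that treats the off-diagonal entry as a centered fluctuation can deliver the stated bound for correlated columns; any valid proof must confront this conditional-mean (equivalently, the anisotropy of the column distribution) head on, which is precisely where the paper deploys the ACG conditioning machinery instead of a leave-one-out expansion. As written, your off-diagonal argument establishes the claim only for diagonal $\Sigma$.
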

\begin{proof}
The proof of Lemma \ref{lemma:5} relies heavily on previous results for the Stiefel Manifold provided in the supplementary materials. We only sketch the basic idea here and leave the complete proof to the supplementary materials. Defining $H = X^T(XX^T)^{-1/2}$, then we have $\Phi = HH^T$ and $H$ follows the Matrix Angular Central Gaussian (MACG) with covariance $\Sigma$. The diagonal terms of $HH^T$ can be bounded similarly via the Johnson-Lindenstrauss lemma, by using the fact that $HH^T = \Sigma^{1/2}U(U^T\Sigma U)^{-1}U\Sigma$, where $U$ is a $p\times n$ random projection matrix. Now for off-diagonal terms, we decompose the Stiefel manifold as $H = (G(H_2)H_1 ~H_2)$, where $H_1$ is a $(p-n+1)\times 1$ vector, $H_2$ is a $p\times (n-1)$ matrix and $G(H_2)$ is chosen so that $(G(H_2)~H_2)\in\mathcal{O}(p)$, and show that $H_1$ follows Angular Central Gaussian (ACG) distribution with covariance $G(H_2)^T\Sigma G(H_2)$ conditional on $H_2$. It can be shown that $e_2HH^Te_1\stackrel{(d)}{=} ~e_2G(H_2)H_1 |e_1^TH_2 = 0$. Let $t_1^2 = e_1^THH^Te_1$, then $e_1^TH_2 = 0$ is equivalent to $e_1^TG(H_2)H_1 = t_1$, and we  obtain the desired coupling distribution as $e_2^THH^Te_1\stackrel{(d)}{=} ~e_2^TG(H_2)H_1 |e_1^TG(H_2)H_1 = t_1$. Using the normal representation of $ACG(\Sigma)$, i.e., if $x = (x_1,\cdots,x_p) \sim N(0, \Sigma)$, then $x/\|x\|\sim ACG(\Sigma)$, we can write $G(H_2)H_1$ in terms of normal variables and then bound all terms using concentration inequalities.
\end{proof}

Lemma \ref{lemma:5} quantifies the entries of the screening matrix for $HOLP$. As illustrated in the lemma, regardless of the covariance $\Sigma$, diagonal terms of $\Phi$ are always $O(\frac{n}{p})$ and the off-diagonal terms are $O(\frac{\sqrt{n}}{p})$. Thus, with $n\geq O(s^2)$, $\Phi$ is likely to satisfy the RDD condition with large probability. For the noise vector we have the following result.
\begin{lemma}\label{lemma:6}
  Let $\eta = A_{HOLP}\epsilon$. Assume $p>c_0n$ for some $c_0>1$, then for any $C>0$ there exist the same $c_1, c_2, c_3$ as in Lemma \ref{lemma:5} such that for any $t>0$ and $i\in Q$,
  \begin{align*}
    P\bigg(|\eta_i|\geq \frac{2\sigma \sqrt{c_2}\kappa t}{1-c_0^{-1}}\frac{\sqrt{n}}{p}\bigg) < 4e^{-Cn} + 2e^{-t^2/2},
  \end{align*}
if $n\geq 8C/(c_0-1)^2$.
\end{lemma}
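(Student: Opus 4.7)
The plan is to condition on $X$ and exploit Gaussianity of $\epsilon$, reducing the problem to a standard Gaussian tail bound plus high-probability control of a data-dependent variance.

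First I would condition on $X$. Since $\epsilon \sim N(0,\sigma^2 I_n)$ is independent of $X$, for each fixed $i$ we have
\begin{align*}
\eta_i \mid X \;\sim\; N\!\bigl(0,\; \sigma^2 V_i\bigr), \qquad V_i \;=\; e_i^T A_{HOLP} A_{HOLP}^T e_i \;=\; e_i^T X^T (XX^T)^{-2} X e_i.
\end{align*}
A standard Gaussian tail inequality then gives $P(|\eta_i| \geq \sigma t \sqrt{V_i} \mid X) \leq 2e^{-t^2/2}$, so the remaining task is to establish a high-probability upper bound on $V_i$ of the correct size $O(n/p^2)$.

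Next I would decouple the numerator and denominator that appear in $V_i$. Factoring $A_{HOLP} = H (XX^T)^{-1/2}$ with $H = X^T(XX^T)^{-1/2}$ (so that $HH^T = \Phi$), the operator bound
\begin{align*}
V_i \;=\; e_i^T H (XX^T)^{-1} H^T e_i \;\leq\; \frac{e_i^T H H^T e_i}{\lambda_{\min}(XX^T)} \;=\; \frac{\Phi_{ii}}{\lambda_{\min}(XX^T)}
\end{align*}
reduces the problem to two ingredients already available in the machinery of Lemma \ref{lemma:5}: (i) the upper bound $\Phi_{ii} \leq c_2 \kappa \, n/p$ with probability at least $1 - 2e^{-Cn}$, which is exactly the diagonal bound of Lemma \ref{lemma:5}, and (ii) a matching lower bound on $\lambda_{\min}(XX^T)$. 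For the latter I would invoke the standard Gaussian random-matrix bound that for $X$ with rows iid $N(0,\Sigma)$ and $p > c_0 n$, $\lambda_{\min}(XX^T) \geq (1 - c_0^{-1/2})^2 \, p \, \lambda_{\min}(\Sigma)$ up to a $2e^{-Cn}$ failure probability, provided $n$ is at least a constant multiple of $C/(c_0-1)^2$; this is precisely where the technical assumption $n \geq 8C/(c_0-1)^2$ enters. Matching constants to the $c_1, c_2, c_3$ from Lemma \ref{lemma:5} (and the $\kappa$ factor encoding the spectrum of $\Sigma$) then yields $V_i \leq \dfrac{4 c_2 \kappa^2}{(1-c_0^{-1})^2} \cdot \dfrac{n}{p^2}$ with probability at least $1 - 4e^{-Cn}$.

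Finally I would combine the two ingredients by a union bound. Conditioning on the event that the above bound on $V_i$ holds and then applying the Gaussian tail inequality gives
\begin{align*}
P\!\left(|\eta_i| \geq \frac{2\sigma \sqrt{c_2}\,\kappa\, t}{1 - c_0^{-1}} \cdot \frac{\sqrt{n}}{p}\right) \;\leq\; 4e^{-Cn} + 2e^{-t^2/2},
\end{align*}
which is the claimed inequality. The main obstacle I anticipate is tracking the constants so that they agree exactly with those introduced in Lemma \ref{lemma:5}; in particular, the lower bound on $\lambda_{\min}(XX^T)$ must be derived using the same Stiefel-manifold / concentration tools from the supplementary material (rather than a generic Davidson--Szarek bound), so that the constant $c_3$ reappears coherently and the coupling of the $2e^{-Cn}$ failure events lines up with the $4e^{-Cn}$ in the final bound. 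The Gaussian tail step itself is routine once the variance bound is in place.
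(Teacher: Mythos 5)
Your proposal is correct and follows essentially the same route as the paper: condition on $X$, note $\eta_i\mid X\sim N(0,\sigma^2 V_i)$, bound $V_i\leq \Phi_{ii}/\lambda_{\min}(XX^T)$ using the diagonal bound from Lemma \ref{lemma:5} together with a Gaussian random-matrix lower bound on $\lambda_{\min}(ZZ^T)$ (the paper in fact does use the generic Vershynin/Davidson--Szarek-type bound here, and $c_3$ plays no role in its proof either), and finish with a union bound and the Gaussian tail inequality. The only discrepancies are cosmetic constant-tracking in the intermediate singular-value bound, which you already flag.
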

The proof is almost identical to Lemma \ref{lemma:4} and is provided in the supplementary materials. The following theorem results after combining Lemma \ref{lemma:5} and \ref{lemma:6}.

\begin{theorem}\label{thm:7}
  Assume $p>c_0n$ for some $c_0>1$. For any $\delta > 0$, if the sample size satisfies 
 \begin{align}
   n > \max\bigg\{2C'\kappa^4(\rho s + \sigma/\tau)^2\log (3p/\delta), ~\frac{8C}{(c_0-1)^2}\bigg\},\label{eq:holp}
 \end{align}
where $C' = \max\{\frac{4c_4^2}{c_1^2}, \frac{4c_2}{c_1^2(1-c_0^{-1})^2}\}$ and $c_1,c_2,c_3,c_4, C$ are the same constants defined in Lemma \ref{lemma:5}, then with probability at least $1-\delta$, $\Phi = A_{HOLP}X - 2\tau^{-1}\|A_{HOLP}\epsilon\|_\infty I_p$ is restricted diagonally dominant with sparsity $s$ and $C_0\geq \rho$. This implies HOLP is screening consistent for any $\beta\in \mathcal{B}_\tau (s, \rho)$.
\end{theorem}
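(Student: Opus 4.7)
The plan is to mirror the proof of Theorem 6 for SIS, but with the entrywise tail bounds of Lemmas 5 and 6 replacing those of Lemmas 3 and 4. The ultimate goal is to invoke Theorem 3, so the task reduces to showing that, with probability at least $1-\delta$, the perturbed screening matrix $\Phi = A_{HOLP}X - 2\tau^{-1}\|A_{HOLP}\epsilon\|_\infty I_p$ is restricted diagonally dominant with sparsity $s$ and $C_0\geq \rho$. Exactly as in the proof of Theorem 6, a clean sufficient condition for RDD is
\[
\min_i (A_{HOLP}X)_{ii} \;>\; 2\rho s \max_{i\neq j}|(A_{HOLP}X)_{ij}| \;+\; 2\tau^{-1}\|A_{HOLP}\epsilon\|_\infty,
\]
so I only need to control these three quantities simultaneously.

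The first step is to choose $t$ in Lemmas 5 and 6 and apply a union bound over the $p$ diagonal indices, the $\binom{p}{2}$ off-diagonal pairs, and the $p$ coordinates of $\eta$. Taking $t = \sqrt{2\log(cp/\delta)}$ for an appropriate absolute constant $c$ absorbs all of the $e^{-t^2/2}$ contributions into a total failure probability at most $\delta/2$, while the $e^{-Cn}$ pieces are rendered negligible by requiring $n\gtrsim \log p$, which is automatic from the final sample-size condition; the auxiliary requirement $n\geq 8C/(c_0-1)^2$ is simply inherited from Lemma 6. On the resulting high-probability event, the diagonal entries of $A_{HOLP}X$ are uniformly at least $c_1\kappa^{-1}n/p$, the off-diagonals are at most $c_4\kappa t\sqrt{n}/p$, and the noise coordinates are at most $2\sigma\sqrt{c_2}\kappa t\sqrt{n}/\big(p(1-c_0^{-1})\big)$.

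Next I substitute these three bounds into the sufficient condition. Both terms on the right-hand side scale like $\sqrt{n}/p$ while the left-hand side scales like $n/p$, so after dividing through by $\sqrt{n}/p$ the inequality collapses to a lower bound on $\sqrt{n}$ of the form $c_1\kappa^{-1}\sqrt{n} > \kappa t\big(2c_4\rho s + 4\sqrt{c_2}\,\sigma/(\tau(1-c_0^{-1}))\big)$. Squaring and applying $(a+b)^2 \leq 2(a^2+b^2)$ to split the signal and noise contributions gives a bound of the form $n > 2\kappa^4 t^2 \cdot \max\{4c_4^2/c_1^2,\, 4c_2/(c_1^2(1-c_0^{-1})^2)\}\cdot (\rho s + \sigma/\tau)^2$, which, after inserting $t^2 = 2\log(3p/\delta)$ (with the constant $c$ fixed so that $cp/\delta \leq 3p/\delta$ after rescaling $\delta$), yields exactly the stated bound with the prescribed $C'$.

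The main obstacle is bookkeeping rather than analysis: one must ensure that the constants $c_1,c_2,c_3,c_4,C$ appearing in Lemma 5 are the \emph{same} instances used in Lemma 6 so that the final $C'$ is well defined, which is why those lemmas share a common hypothesis on $p>c_0 n$ and a common constant pool. A secondary subtlety is that the $e^{-Cn}$ tails cannot be merged into a single $\log p$ factor without the assumption $p>c_0 n$; this is what forces that assumption to propagate into the theorem statement. Once these accounting points are handled, the rest of the argument is a direct translation of the SIS proof template to the sharper bounds provided by Lemmas 5 and 6.
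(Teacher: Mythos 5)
Your proposal follows essentially the same route as the paper's proof: the identical sufficient condition $\min_i\Phi_{ii} > 2\rho s\max_{i\neq j}|\Phi_{ij}| + 2\tau^{-1}\|A_{HOLP}\epsilon\|_\infty$, a union bound over the entrywise tails of Lemmas \ref{lemma:5} and \ref{lemma:6}, and then solving for $n$; the only differences are cosmetic (you set $t=\sqrt{2\log(cp/\delta)}$ directly where the paper writes $t=\sqrt{Cn}/\nu$ and optimizes over $\nu$, and you package the final constant via $(a+b)^2\leq 2(a^2+b^2)$ rather than the paper's $\max\{C_1,C_2\}$ bound). The argument is correct up to the same level of constant bookkeeping as the paper itself.
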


\begin{proof}
Notice that if
\begin{align}
\min_i |\Phi_{ii}| >2s\rho \max_{ij}|\Phi_{ij}| + 2\tau^{-1}\|X^T(XX^T)^{-1}\epsilon\|_\infty,\label{eq:P}
\end{align}
then the proof is complete because $\Phi - 2\tau^{-1}\|X^T(XX^T)^{-1}\epsilon\|_\infty$ is already a restricted diagonally dominant matrix. Let $t = \sqrt{Cn}/\nu$. The above equation then requires
\begin{align*}
  c_1\kappa^{-1}\frac{n}{p} - \frac{2c_4\sqrt{C}\kappa s\rho}{\nu}\frac{n}{p} - \frac{2\sigma \sqrt{c_2C}\kappa t}{(1-c_0^{-1})\tau\nu}\frac{n}{p} = \big(c_1\kappa^{-1} - \frac{2c_4\sqrt{C}\kappa s\rho}{\nu} - \frac{2\sigma \sqrt{c_2C}\kappa}{(1-c_0^{-1})\tau\nu}\big)\frac{n}{p}>0,
\end{align*}
which implies that
\begin{align*}
  \nu > \frac{2c_4\sqrt{C}\kappa^{2}\rho s}{c_1} + \frac{2\sigma\sqrt{c_2C}\kappa^2}{c_1(1-c_0^{-1})\tau} = C_1\kappa^{2}\rho s +C_2\kappa^2\tau^{-1}\sigma > 1,
\end{align*}
where $C_1 = \frac{2c_4\sqrt{C}}{c_1},~ C_2 = \frac{2\sqrt{c_2C}}{c_1(1-c_0^{-1})}$. Therefore, taking union bounds on all matrix entries, we have
\begin{align*}
  P\bigg(\big\{\mbox{\eqref{eq:P} does not hold}\big\} \bigg)< (p + 5p^2)e^{-Cn} + 2p^2e^{-Cn/\nu}< (7 + \frac{1}{n})p^2e^{-Cn/\nu^2},
\end{align*}
where the second inequality is due to the fact that $p>n$ and $\nu>1$. Now for any $\delta>0$, \eqref{eq:P} holds with probability at least $1-\delta$ if
\begin{align*}
  n \geq \frac{\nu^2}{C}\bigg(\log(7 + 1/n) + 2\log p - \log \delta\bigg),
\end{align*}
 which is satisfied provided (noticing $\sqrt{8}<3$)
$
   n \geq \frac{2\nu^2}{C}\log\frac{3p}{\delta}.
$
 Now pushing $\nu$ to the limit gives \eqref{eq:holp},
the precise condition we need.
\end{proof}

There are several interesting observations on equation \eqref{eq:sis} and \eqref{eq:holp}. First, $(\rho s + \sigma/\tau)^2$ appears in both expressions, suggesting this term might be a common requirement across all linear screening methods. We note that $\rho s$ evaluates the sparsity and the diversity of the signal $\beta$ while $\sigma/\tau$ is closely related to the signal-to-noise ratio. 
Furthermore, $HOLP$ relaxes the correlation constraint $r<1/( 2\rho s)$ or the covariance constraint ($\Sigma$ is RDD) with the conditional number constraint. Thus for any $\Sigma$, as long as the sample size is large enough, strong screening consistency is assured. 
Finally, $HOLP$ provides an example to satisfy the RDD condition in answer to the question raised in Section 4. 

\section{Concluding remarks}
This article studies the theoretical properties of a class of high dimensional variable screening methods. In particular, we establish a necessary and sufficient condition in the form of restricted diagonally dominant screening matrices for strong screening consistency of a linear screener. We verify the condition for both $SIS$ and $HOLP$ under random designs. In addition, we show a close relationship between RDD and the IC, highlighting the difficulty of using SIS in screening for arbitrarily correlated predictors.
\par
For future work, it is of interest to see how linear screening can be adapted to compressed sensing \cite{xue2011sure} and how techniques such as preconditioning \cite{jia2012preconditioning} can improve the performance of marginal screening and variable selection.


\bibliography{screening-arXiV-v2}

\newpage

\section*{Appendix A: Proofs for Section 3}
In this section, we prove the two theorems in Section 3.
\begin{proof}[\textbf{Proof of Theorem \ref{thm:2}}]
  If $\Phi$ is restricted diagonally dominant with sparsity $s$ and $C_0\geq \rho$, we have for any $I\subseteq Q$ and $|I|\leq s-1$,
  \begin{align*}
    \Phi_{ii}> \rho\max\bigg\{\sum_{j\in I} |\Phi_{ij} + \Phi_{kj}|, ~\sum_{j\in I} |\Phi_{ij} - \Phi_{kj}|\bigg\} + |\Phi_{ik}|\quad \forall k\neq i\in Q\setminus I.
  \end{align*}
Recall $\hat\beta = \Phi\beta$. Suppose $S$ is the index set of non-zero
predictors. For any $i\in S, k\not\in S$, of we fix $I = S\setminus\{i\}$, we have
\begin{align*}
\notag  |\hat\beta_i| &= |\Phi_{ii}\beta_i + \sum_{j\in I} \Phi_{ij}\beta_j| \geq |\beta_i|(\Phi_{ii} + \sum_{j\in I} \frac{\beta_j}{\beta_i} \Phi_{ij})\\
\notag & = |\beta_i|(\Phi_{ii} + \sum_{j\in I}\frac{\beta_j}{\beta_i} (\Phi_{ij} + \Phi_{kj}) + \Phi_{ki}- \sum_{j\in I} \frac{\beta_j}{\beta_i}\Phi_{kj} - \Phi_{ki})\\
\notag  &> -|\beta_i|(\sum_{j\in I} \frac{\beta_j}{\beta_i}\Phi_{kj} + \Phi_{ki}) = -\frac{|\beta_i|}{\beta_i}(\sum_{j\in I} \beta_j\Phi_{kj} + \beta_i\Phi_{ki})\\
&= -sign(\beta_i) \cdot \hat\beta_k.
\end{align*}
Similarly we have
\begin{align*}
\notag  |\hat\beta_i| &= |\Phi_{ii}\beta_i + \sum_{j\in I} \Phi_{ij}\beta_j| \geq |\beta_i|(\Phi_{ii} + \sum_{j\in I} \frac{\beta_j}{\beta_i} \Phi_{ij})\\
\notag & = |\beta_i|(\Phi_{ii} + \sum_{j\in I} \frac{\beta_j}{\beta_i}(\Phi_{ij} - \Phi_{kj}) - \Phi_{ki} + \sum_{j\in I}\frac{\beta_j}{\beta_i}\Phi_{kj} + \Phi_{ki})\\
&> |\beta_i|(\sum_{j\in I} \frac{\beta_j}{\beta_i}\Phi_{kj} + \Phi_{ki}) = sign(\beta_i) \cdot \hat\beta_k.
\end{align*}
Therefore, whatever value $sign(\beta_i)$ is, it always holds that $|\hat\beta_i|> |\hat\beta_k|$. Since this result is true for any $i\in S, k\not\in S$, we have
\begin{align*}
  \min_{i\in S}|\hat\beta_i| > \max_{k\not\in S}|\hat\beta_k|.
\end{align*}
To prove the sign consistency for non-zero coefficients, notice that for $i\in S$,
\begin{align*}
  \Phi_{ii}> \rho(\sum_{j\in I} |\Phi_{ij} + \Phi_{kj}| + \sum_{j\in I} |\Phi_{ij} - \Phi_{kj}|)/2 \geq \rho\sum_{j\in I}|\Phi_{ij}|.
\end{align*}
Thus,
\begin{align*}
  \hat\beta_i\beta_i = \Phi_{ii}\beta_i^2 + \sum_{j\in I}\Phi_{ij}\beta_j\beta_i = \beta_i^2(\Phi_{ii} + \sum_{j\in I}\frac{\beta_j}{\beta_i}\Phi_{ij}) > 0.
\end{align*}
On the other hand, if $\hat\beta$ is screening consistent, i.e., $|\hat\beta_i|\geq|\hat\beta_k|$ and $\hat\beta_i\beta_i\geq 0$,  we can construct $S = I\cup\{i\}$ for any fixed $i, k, I$. Without loss of generality, we assume $\Phi_{ik}\geq 0$. If we select $\beta$ such that $\beta_i>0$, then the strong screening consistency implies $\hat\beta_i > \hat\beta_k$ and  $\hat\beta_i > -\hat\beta_k$. From  $\hat\beta_i > \hat\beta_k$ we have
\begin{align*}
  \Phi_{ii}\beta_i + \sum_{j\in I}\Phi_{ij}\beta_j > \sum_{j\in I} \Phi_{kj}\beta_j + \Phi_{ki}\beta_i.
\end{align*}
By rearranging terms and selecting $\beta\in \mathcal{B}(s, \rho)$ as $\beta_i = 1, \beta_j = -\rho\cdot sign(\Phi_{ij} - \Phi_{kj}), j\in S$ we have
\begin{align*}
  \Phi_{ii} > -\sum_{j\in I} (\Phi_{ij} - \Phi_{kj})\beta_j + \Phi_{ki} = \rho\sum_{j\in I}|\Phi_{ij} - \Phi_{kj}| + |\Phi_{ki}|.
\end{align*}
Following the same argument on $\hat\beta_i \geq -\hat\beta_k$ with a choice of $\beta_i = 1, \beta_j = -\rho\cdot sign(\Phi_{ij} + \Phi_{kj}), j\in S$ we have
\begin{align*}
   \Phi_{ii} > \rho\sum_{j\in I}|\Phi_{ij} + \Phi_{kj}| + |\Phi_{ki}|.
\end{align*}
This concludes the proof.
\end{proof}

\begin{proof}[\textbf{Proof of Theorem \ref{thm:3}}]
Proof of Lemma 3 follows almost the same as the sufficiency part of Theorem \ref{thm:2}. Notice that now the definition of $\hat\beta$ becomes
\begin{align*}
\notag  \hat\beta &= X^T(XX^T)^{-1}X\beta + X^T(XX^T)^{-1}\epsilon.
\end{align*}
If the condition holds, i.e., for any $i\in S$, $I = S\setminus\{i\}$ and $k\not\in S$, we have
  \begin{align*}
    \Phi_{ii}> \rho\max\bigg\{\sum_{j\in I} |\Phi_{ij} + \Phi_{kj}|, ~\sum_{j\in I} |\Phi_{ij} - \Phi_{kj}|\bigg\} + |\Phi_{ik}| + 2\tau^{-1}\|X^T(XX^T)^{-1}\epsilon\|_\infty.
  \end{align*}
Defining $\eta = X^T(XX^T)^{-1}\epsilon$, we have for any $i\in S$,
\begin{align*}
\notag  |\hat\beta_i| &= |\Phi_{ii}\beta_i + \sum_{j\in I} \Phi_{ij}\beta_j + \eta_i| \geq |\beta_i|(\Phi_{ii} + \sum_{j\in I} \frac{\beta_j}{\beta_i} \Phi_{ij} + \beta_i^{-1}\eta_i)\\
\notag & = |\beta_i|(\Phi_{ii} + \sum_{j\in I}\frac{\beta_j}{\beta_i} (\Phi_{ij} + \Phi_{kj}) + \Phi_{ki} + \beta_i^{-1}(\eta_i+\eta_k)- \sum_{j\in I} \frac{\beta_j}{\beta_i}\Phi_{kj} - \Phi_{ki} - \beta_i^{-1}\eta_k)\\
\notag &> -|\beta_i|(\sum_{j\in I} \frac{\beta_j}{\beta_i}\Phi_{kj} + \Phi_{ki} + \beta_i^{-1}\eta_k) = -\frac{|\beta_i|}{\beta_i}(\sum_{j\in I} \beta_j\Phi_{kj} + \beta_i\Phi_{ki} + \eta_k) \\
&= -sign(\beta_i) \cdot \hat\beta_k,
\end{align*}
Similarly, we can prove $|\hat\beta_i|> sign(\beta_i)\cdot \hat\beta_k$, and thus $|\hat\beta_i|> |\hat\beta_k|$, which implies that
\begin{align*}
  \min_{i\in S}|\hat\beta_i|>\max_{k\not\in S}|\hat\beta_k|.
\end{align*}
The weak sign consistency is established since
\begin{align*}
  \hat\beta_i\beta_i = \Phi_{ii}\beta_i^2 + \sum_{j\in I}\Phi_{ij}\beta_j\beta_i + \eta_i\beta_i = \beta_i^2(\Phi_{ii} + \sum_{j\in I}\frac{\beta_j}{\beta_i}\Phi_{ij} + \beta_i^{-1}\eta_i)> 0,
\end{align*}
for any $\beta_i\neq 0$. 
\par
The tightness of this theorem is given by the case when $\epsilon = 0$, for which the condition has already been shown to be necessary and sufficient in Theorem \ref{thm:2}.
\end{proof}

\section*{Appendix B: Proofs for Section 4}
In this section, we prove results from Section 4 that are not covered in the main article.

\begin{proof}[\textbf{Proof of Corollary \ref{lemma:ic1}}]
Letting $I\subseteq Q, |I|\leq s-1$, we have for any $i\neq k\in Q\setminus I$, 
\begin{align*}
\notag   \Phi_{ii} - &\frac{1}{c}\max\bigg\{\sum_{j\in I} |\Phi_{ij} + \Phi_{kj}|, ~\sum_{j\in I} |\Phi_{ij} - \Phi_{kj}|\bigg\} + |\Phi_{ik}|
\geq 1 - \frac{1}{c} \bigg(2(s-1)\frac{c}{2s} + \frac{c}{2s}\bigg) = \frac{1}{2s} >0.
\end{align*}
  This completes the proof for the first case. 
\par
Now for the second case, notice that the sum of an entire row (except the diagonal term) can be bounded by $\sum_{j\neq i}|\Phi_{ij}| < 2\sum_{k = 1}^\infty r^k < \frac{2r}{1 - r}.$
Therefore, we have
\begin{align*}
\notag   \Phi_{ii} - &\frac{(1 - r)^2}{4r}\max\bigg\{\sum_{j\in I} |\Phi_{ij} + \Phi_{kj}|, ~\sum_{j\in I} |\Phi_{ij} - \Phi_{kj}|\bigg\} - |\Phi_{ik}|> 1 - \frac{(1-r)^2}{2r}\sum_{j\neq i}|\Phi_{ij}| - r = 0.
\end{align*}
\end{proof}

\begin{proof}[\textbf{Proof of Theorem \ref{thm:4}}]
First, from RDD to IC:
Without loss of generality, we assume $S = \{1,2\}$. For any $k\in Q\setminus S$, we have
  \begin{align*}
\notag    \bigg|[\Phi_{k1}~\Phi_{k2}]\Phi_{S,~S}^{-1}sign(\beta_S) \bigg|&= \bigg|\frac{sign(\beta_1)(\Phi_{k1} - \Phi_{12}\Phi_{k2}) + sign(\beta_2)(-\Phi_{12}\Phi_{k1} + \Phi_{k2})}{1 - \Phi_{12}^2}\bigg|.
  \end{align*}
The r.h.s. becomes $|\Phi_{k1} + \Phi_{k2}|(1 - \Phi_{12})/(1 - \Phi_{12}^2)$ when $sign(\beta_1) = sign(\beta_2)$ and $|\Phi_{k1} - \Phi_{k2}|(1 + \Phi_{12})/(1 - \Phi_{12}^2)$ when $sign(\beta_1) = -sign(\beta_2)$. In either case we have
\begin{align*}
\notag    \bigg|[\Phi_{k1}~\Phi_{k2}]\Phi_{S,~S}^{-1}sign(\beta_S) \bigg|\leq \frac{\max\bigg\{|\Phi_{1k} + \Phi_{2k}|, |\Phi_{1k} - \Phi_{2k}|\bigg\}}{1 - r} < \frac{\rho^{-1}}{1 - r}.
\end{align*}
\par
Second, from IC to RDD: Let $I\subseteq Q, |I| = 1$ and $i\neq k\in Q\setminus I$. Without loss of generality, we assume $i = 1, k = 2$, and we construct $S = \{1,2\}$. Now for any $j\in I$, using the same formula as shown above, we have
\begin{align*}
\notag    1-\theta \geq \bigg|[\Phi_{j1}~\Phi_{j2}]\Phi_{S,~S}^{-1}sign(\beta_S)\bigg| &= \bigg|\frac{sign(\beta_1)(\Phi_{j1} - \Phi_{12}\Phi_{j2}) + sign(\beta_2)(-\Phi_{12}\Phi_{j1} + \Phi_{j2})}{1 - \Phi_{12}^2}\bigg|.
\end{align*}
Using the same result on the r.h.s., i.e., it becomes $|\Phi_{k1} + \Phi_{k2}|(1 - \Phi_{12})/(1 - \Phi_{12}^2)$ when $sign(\beta_1) = sign(\beta_2)$ and $|\Phi_{k1} - \Phi_{k2}|(1 + \Phi_{12})/(1 - \Phi_{12}^2)$ when $sign(\beta_1) = -sign(\beta_2)$, we have for any $j\in I$ that
\begin{align*}
  \max\bigg\{|\Phi_{1j} + \Phi_{2j}|, |\Phi_{1j} - \Phi_{2j}|\bigg\} \leq (1-\theta)(1 + r).
\end{align*}
As a result, we have
\begin{align*}
  \sum_{j\in I}\max\bigg\{|\Phi_{1j} + \Phi_{2j}|, |\Phi_{1j} - \Phi_{2j}|\bigg\} < (1 - \theta)(1 + r) <(1-\theta)\frac{1+r}{1-r}\bigg(\Phi_{11} - |\Phi_{12}|\bigg),
\end{align*}
which implies
\begin{align*}
  \Phi_{11} > \frac{1}{1-\theta}\frac{1 -r }{1 + r}\sum_{j\in I}\max\bigg\{|\Phi_{1j} + \Phi_{2j}|, |\Phi_{1j} - \Phi_{2j}|\bigg\} + |\Phi_{12}|.
\end{align*}
\end{proof}

\begin{proof}[\textbf{Proof of Theorem \ref{thm:5}}]
We just need to check \eqref{eq:ic}. We prove the absolute value of the first coordinate of $C_{S^c,~S}C_{S,~S}^{-1}\cdot sign(\beta_S)$ is less than one, and the rest just follow the same argument. From the condition we know $C = X^TX/n$ is restricted diagonally dominant. Then equation \eqref{eq:dom} implies that for any $I\subseteq Q$ with $|I| = s$, we have for any $k\not\in I$,
\begin{align*}
  \rho\sum_{i\in I}|C_{ki}|<1.
\end{align*}
Now for any $S\subseteq Q$ with $|S|=s$, we choose $I = S$ and let $\alpha^T$ be the first row of $C_{S^c, ~S} = X^T_{S^c}X_S/n$, we have
\begin{align*}
  |\alpha^T(X_S^TX_S/n)^{-1}sign(\beta_S)|\leq \|\alpha\|_2\|sign(\beta_S)\|_2\mu^{-1}.
\end{align*}
Because $\rho \sum_{i=1}^s|\alpha_i|<1$, we have
\begin{align*}
  \rho^2\sum_{i=1}^s\alpha_i^2< \rho^2(\sum_{i=1}^s |\alpha_i|)^2<1,
\end{align*}
which implies that
\begin{align*}
  |\alpha^T(X_S^TX_S/n)^{-1}sign(\beta_S)|\leq \rho^{-1}\sqrt{s}\mu^{-1} = \frac{\sqrt{s}}{\rho\mu}<1.
\end{align*}
\end{proof}

\section*{Appendix C: Proofs for Section 6 (SIS)}
Proofs in Section 6 are divided into two parts. In this section, we provide the proofs related to SIS, and leave those pertaining to HOLP to the next section. The proof requires the following proposition,
\begin{Prop}\label{prop:1}
  Assume $X_i\sim \mathcal{X}^2(1), i = 1,2,\cdots, n$, where $\mathcal{X}^2(1)$ is the chi-square distribution with one degree of freedom. Then for any $t>0$, we have
  \begin{align*}
    P(|\frac{\sum_{i=1}^n X_i}{n} - 1| \geq t)\leq 2\exp\bigg\{-\min\bigg(\frac{t^2n}{8e^2K}, \frac{tn}{2eK}\bigg)\bigg\},
  \end{align*}
where $K = \|\mathcal{X}^2(1) - 1\|_{\psi_1}$. Alternatively, for any $C>0$, there exists some $0<c_3<1<c_4$ such that,
\begin{align}
  P(\frac{\sum_{i=1}^n X_i}{n} \leq c_3)\leq e^{-Cn}, \label{eq:concentration1}
\end{align}
and
\begin{align*}
  P(\frac{\sum_{i=1}^n X_i}{n} \geq c_4)\leq e^{-Cn}.
\end{align*}
\end{Prop}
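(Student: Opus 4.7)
The plan is to recognize the first inequality as Bernstein's inequality for i.i.d.\ centered sub-exponential variables, which is essentially Proposition 5.16 in Vershynin (2010), already cited in the paper. The second part follows by picking a single deviation level large enough to absorb the prescribed constant $C$.

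First, I would set $Y_i = X_i - 1$. By construction the $Y_i$ are i.i.d., centered, and sub-exponential with $\|Y_i\|_{\psi_1} = K$; the constant $K$ is finite because the moment generating function of $\mathcal{X}^2(1)$ is explicit, $E e^{\lambda (X_i-1)} = (1-2\lambda)^{-1/2} e^{-\lambda}$, and is real-analytic in a neighborhood of zero. The standard moment translation from the $\psi_1$ norm gives $E|Y_i|^p \leq p!\, K^p$ for all integers $p \geq 1$, and expanding the exponential termwise then summing the resulting geometric tail yields an MGF bound of the form $E e^{\lambda Y_i} \leq \exp(e^2 K^2 \lambda^2)$ valid whenever $|\lambda| \leq 1/(2eK)$. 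A Chernoff argument on $\sum_i Y_i$ followed by optimization in $\lambda$ -- splitting into the sub-Gaussian regime $\lambda = t/(2e^2 K^2)$ when this is admissible, and the sub-exponential regime saturating at $\lambda = 1/(2eK)$ otherwise -- produces the two competing exponents $t^2 n/(8 e^2 K)$ and $tn/(2eK)$. A union bound with the analogous lower-tail estimate (applied to $-Y_i$) contributes the factor of $2$ in front and delivers the stated inequality.

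For the second part I would apply the first with a single well-chosen deviation $t = t(C)$ large enough that $\min\bigl(t^2/(8e^2 K),\, t/(2eK)\bigr) \geq C$; for example $t = \max\bigl(2eKC,\, \sqrt{8 e^2 K C}\bigr)$ works. Then set $c_4 = 1 + t$ and $c_3 = 1 - t$, replacing $c_3$ by any constant in $(0,1)$ in the degenerate case $1 - t \leq 0$ (where the one-sided tail bound is trivially sharper than what we need). The one-sided versions of the first inequality immediately yield the two claimed bounds.

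There is no real obstacle here: the only care required is cosmetic, namely tracking the normalization convention for $\|\cdot\|_{\psi_1}$ so as to land on the precise constants $1/(8e^2)$ and $1/(2eK)$. Using Vershynin's convention $\|Y\|_{\psi_1} = \inf\{t > 0 : E e^{|Y|/t} \leq 2\}$ and keeping factors straight through the Taylor expansion reproduces them exactly; no genuinely subtle probabilistic estimate is involved beyond the MGF/Chernoff skeleton above.
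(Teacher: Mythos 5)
Your treatment of the first inequality is correct and is essentially what the paper does: the paper's entire proof is a citation of Proposition 5.16 in Vershynin (2010) with the constants $C=2e^2$, $c=e/2$ read off from its proof, and your centering/MGF/Chernoff derivation is precisely the content of that proposition, so no comparison is needed there.

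The genuine gap is in the second part, on the lower tail. Writing $\overline{X}_n = n^{-1}\sum_{i=1}^n X_i$, your recipe sets $c_3 = 1-t$ with $t = \max(2eKC, \sqrt{8e^2KC})$ and, in the degenerate case $1-t\le 0$, proposes to ``replace $c_3$ by any constant in $(0,1)$'' on the grounds that the one-sided bound is trivially sharper than needed. That step fails: the Bernstein inequality controls $P(\overline{X}_n \le 1-t)$, and $P(\overline{X}_n \le c_3) \ge P(\overline{X}_n \le 1-t)$ for $c_3 > 1-t$, so the inequality points the wrong way and a vacuous bound at the level $1-t\le 0$ says nothing about a level $c_3\in(0,1)$. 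Quantitatively, because the lower-side deviation is capped at $t=1$ (as $\overline{X}_n\ge 0$), the best exponent the first inequality can ever deliver for the lower tail is $\min\bigl(1/(8e^2K),\,1/(2eK)\bigr)n$; once $C$ exceeds that constant, no $c_3\in(0,1)$ is covered by part one (take $c_3=0.99$ and $C=100$: the claimed bound is plainly false for moderate $n$, so $c_3$ genuinely must shrink with $C$). The statement is still true, but it requires a separate chi-square lower-tail Chernoff argument rather than a corollary of the Bernstein bound: since $Ee^{-\lambda X_1}=(1+2\lambda)^{-1/2}$ for all $\lambda>0$, optimizing at $\lambda=(1-c_3)/(2c_3)$ gives $P(\overline{X}_n\le c_3)\le \exp\{-\tfrac{n}{2}(c_3-1-\log c_3)\}$, and $c_3-1-\log c_3\to\infty$ as $c_3\downarrow 0$, so $c_3=c_3(C)$ can be chosen to beat any prescribed $C$. (The paper's one-line proof is silent on this point as well, but your explicit justification for the degenerate case is wrong as written.) The upper tail is unproblematic, since there $t$ may be taken arbitrarily large and the linear regime $tn/(2eK)$ grows without bound.
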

\begin{proof}
  It is a direct application of Proposition 5.16 in \cite{vershynin2010introduction}. Notice that in the proof of Proposition 5.16 we have $C = 2e^2$ and $c = e/2$ for $\mathcal{X}^2(1) -1$.
\end{proof}

\begin{proof}[\textbf{Proof of Lemma \ref{lemma:3}}]
For diagonal term we have
for any $i \in \{1,2,\cdots,p\}$
\begin{align*}
  \Phi_{ii} - \Sigma_{ii} = \frac{\sum_{k = 1}^n x_{ik}^2}{n} - 1,
\end{align*}
where $x_{ik}, k = 1,2,\cdots, n$'s are $n$ iid standard normal random variables. Using Proposition \ref{prop:1}, we have for any $t>0$,
\begin{align}
  P\bigg(|\Phi_{ii} - \Sigma_{ii}|\geq t\bigg)\leq 2\exp\bigg\{-\min\bigg(\frac{t^2n}{8e^2K}, \frac{tn}{2eK}\bigg)\bigg\}. \label{eq:diag1}
\end{align}


For the off-diagonal term, we have for any $i\neq j$,
\begin{align*}
\notag  \Phi_{ij} - \Sigma_{ij} &= \frac{\sum_{k = 1}^n x_{ik}x_{jk}}{n} - \Sigma_{ij}\\
\notag &=\frac{\sum_{k=1}^n (x_{ik}+x_{jk})^2}{2n} - \frac{\sum_{k=1}^n x_{ik}^2}{2n} - \frac{\sum_{k=1}^n x_{jk}^2}{2n} - \Sigma_{ij}\\
&=\frac{1}{2}\bigg(\frac{\sum_{k=1}^n (x_{ik}+x_{jk})^2}{n} - (2+2\Sigma_{ij})\bigg) - \frac{1}{2}\bigg(\frac{\sum_{k=1}^n x_{ik}^2}{n} - 1\bigg) - \frac{1}{2}\bigg(\frac{\sum_{k=1}^n x_{jk}^2}{n} -1\bigg).
\end{align*}
Notice that $x_{ik} + x_{jk}\sim N(0, 2+2\Sigma_{ij})$. Hence the three terms in the above equation can be bounded using the same inequality before, i.e., for any $t>0$,
\begin{align*}
  P\bigg(|\Phi_{ij} - \Sigma_{ij}|\geq (2+\Sigma_{ij})t\bigg)\leq 6\exp\bigg\{-\min\bigg(\frac{t^2n}{8e^2}, \frac{tn}{2e}\bigg)\bigg\}.
\end{align*}
Clearly, we have $\Sigma_{ij}\leq \sqrt{\Sigma_{ii}}\sqrt{\Sigma_{jj}}\leq 1$. Therefore, we have
\begin{align*}
  P\bigg(|\Phi_{ij} - \Sigma_{ij}|\geq t\bigg)\leq 6\exp\bigg\{-\min\bigg(\frac{t^2n}{72e^2K}, \frac{tn}{6eK}\bigg)\bigg\}.
\end{align*}
\end{proof}

\begin{proof}[\textbf{Proof of Lemma \ref{lemma:4}}]
  The proof is essentially the same for proving the off diagonal terms of
  $\Phi$ as in Lemma \ref{lemma:3}. The only difference is that $E(\Phi_{ij})
  = \Sigma_{ij}$ while $E(X\epsilon) = 0$. Note
\begin{align*}
\notag  \eta_i/\sigma &= \frac{\sum_{k = 1}^n x_{ik}\epsilon_k/\sigma}{n} =\frac{\sum_{k=1}^n (x_{ik}+\epsilon_k/\sigma)^2}{2n} - \frac{\sum_{k=1}^n x_{ik}^2}{2n} - \frac{\sum_{k=1}^n \epsilon_k^2/\sigma^2}{2n}.
\end{align*}
Using Proposition \ref{prop:1}, we have
\begin{align*}
  P\bigg(|\eta_i/\sigma|\geq t\bigg)\leq 6\exp\bigg\{-\min\bigg(\frac{t^2n}{72e^2K}, \frac{tn}{6eK}\bigg)\bigg\}.
\end{align*}
\end{proof}

Now we turn to the proof of Theorem \ref{thm:6}.
\begin{proof}[\textbf{Proof of Theorem \ref{thm:6}}]
  Taking union bound on the results from Lemma \ref{lemma:3} and \ref{lemma:4}, we have for any $t>0$,
\begin{align*}
  P\bigg(\min_{i\in Q}\Phi_{ii} \leq 1 -  t\bigg)\leq 2p\exp\bigg\{-\min\bigg(\frac{t^2n}{8e^2K}, \frac{tn}{2eK}\bigg)\bigg\},
\end{align*}
\begin{align*}
  P\bigg(\max_{i\neq j}|\Phi_{ij}| \geq r + t\bigg)\leq 6(p^2-p)\exp\bigg\{-\min\bigg(\frac{t^2n}{72e^2K}, \frac{tn}{6eK}\bigg)\bigg\},
\end{align*}
and 
\begin{align*}
  P\bigg(\max_{i\in Q}|\eta_i|\geq \sigma t\bigg)\leq 6p\exp\bigg\{-\min\bigg(\frac{t^2n}{72e^2K}, \frac{tn}{6eK}\bigg)\bigg\}.
\end{align*}
Thus, when $p > 2$ we have
\begin{align*}
  P\bigg(\min_{i\in Q}\Phi_{ii} \leq 1 -  t \mbox{ or }\max_{i\neq j}|\Phi_{ij}| \geq r + t\mbox{ or } \max_{i\in Q}|\eta_i|\geq \sigma t\bigg) \leq 7p^2\exp\bigg\{-\min\bigg(\frac{t^2n}{72e^2K}, \frac{tn}{6eK}\bigg)\bigg\}.
\end{align*}
In other words, for any $\delta > 0$, when $n\geq K\log(7p^2/\delta)$, with probability at least $1 - \delta$, we have
\begin{align*}
  \min_{i\in Q}\Phi_{ii} \geq 1 -  6\sqrt{2}e\sqrt{\frac{K\log (7p^2/\delta)}{n}},\quad \max_{i\neq j}|\Phi_{ij}| \leq r + 6\sqrt{2}e\sqrt{\frac{K\log (7p^2/\delta)}{n}},
\end{align*}
and
\begin{align*}
  \max_{i\in Q}|\eta_i|\leq 6\sqrt{2}e\sigma \sqrt{\frac{K\log (7p^2/\delta)}{n}}.
\end{align*}
A sufficient condition for $\Phi$ to be restricted diagonally dominant is that 
\[
\min_i \Phi_{ii} > 2\rho s\max_{i\neq j}|\Phi_{ij}| + 2\tau^{-1}\max_{i}|\eta_i|.
\]
Plugging in the values and solving the inequality, we have (notice that $7p^2/\delta < 9p^2/\delta^2$) $\Phi$ is RDD as long as
\begin{align*}
  n > 144K\bigg(\frac{1 + 2\rho s + 2\sigma/\tau}{1 - 2\rho s r}\bigg)^2\log (3p/\delta).
\end{align*}
This completes the proof.
\end{proof}

\section*{Appendix D: Proofs for Section 6 (HOLP)}
In this section we prove Lemma \ref{lemma:5}, \ref{lemma:6} and Theorem \ref{thm:6}. Several propositions and lemmas are needed for establishing the whole theory. We list all prerequisite results without proofs but provide readers references for complete proofs.
\par
Let $P\in \mathcal{O}(p)$ be a $p\times p$ orthogonal matrix from the orthogonal group $\mathcal{O}(p)$. Let $H$ denote the first $n$ columns of $P$. Then $H$ is in the Stiefel manifold \cite{chikuse2003statistics}. In general, the Stiefel manifold $V_{n,p}$ is the space whose points are $n$-frames in $\mathcal{R}^p$ represented as the set of $p\times n$ matrices $X$ such that $X^TX = I_n$. Mathematically,  we can write
\[
V_{n,p} = \{X \in R^{p\times n}: X^TX = I_n\}.
\]
There is a natural measure $(dX)$ called Haar measure on the Stiefel manifold, invariant under both right orthogonal and left orthogonal transformations. We standardize it to obtain a probability measure as $[dX] = (dX)/V(n,p)$, where $V(n,p) = {2^n\pi^{np/2}}/{\Gamma_n(1/2p)}$.
\begin{lemma} \label{Lemma:Chikuse1} \cite[Page 41-44]{chikuse2003statistics}
Supposed that a $p\times n$ random matrix $Z$ has the density function of the form
\[
f_Z(Z) = |\Sigma|^{-n/2}g(Z^T\Sigma^{-1}Z),
\]
which is invariant under the right-orthogonal transformation of $Z$, where $\Sigma$ is a $p\times p$ positive definite matrix. Then its orientation $H_z = Z(Z^TZ)^{-1/2}$ has the matrix angular central Gaussian distribution (MACG) with a probability density function
\[
MACG(\Sigma) = |\Sigma|^{-n/2}|H_z^T\Sigma^{-1}H_z|^{-p/2}.
\]
In particular, if $Z$ is a $p\times n$ matrix whose distribution is invariant under both the left- and right-orthogonal transformations, then $H_Y$, with $Y = BZ$ for $BB^T = \Sigma$, has the $MACG(\Sigma)$ distribution.
\end{lemma}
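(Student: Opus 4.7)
The plan is to compute the marginal density of the orientation $H := Z(Z^TZ)^{-1/2}$ by integrating out the positive definite part after a polar decomposition. First I would write $Z = HT^{1/2}$ with $H\in V_{n,p}$ and $T = Z^TZ$ symmetric positive definite, and apply the standard Jacobian $(dZ) = 2^{-n}|T|^{(p-n-1)/2}[dH](dT)$. Substituting into the given density $f_Z(Z) = |\Sigma|^{-n/2}g(Z^T\Sigma^{-1}Z)$ and using $Z^T\Sigma^{-1}Z = T^{1/2}AT^{1/2}$ with $A := H^T\Sigma^{-1}H$, the resulting joint density of $(H,T)$ couples $H$ and $T$ only through the quadratic form $T^{1/2}AT^{1/2}$.

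The critical observation is that right-orthogonal invariance of $f_Z$ forces $g(Q^TMQ) = g(M)$ for every $Q\in\mathcal{O}(n)$, so $g$ depends only on the spectrum of its argument. Since $T^{1/2}AT^{1/2} = (A^{1/2}T^{1/2})^T(A^{1/2}T^{1/2})$ and $A^{1/2}TA^{1/2} = (A^{1/2}T^{1/2})(A^{1/2}T^{1/2})^T$ share eigenvalues, one can replace $g(T^{1/2}AT^{1/2})$ by $g(A^{1/2}TA^{1/2})$. This unlocks the genuinely linear change of variables $T \mapsto T' := A^{1/2}TA^{1/2}$ on the cone of SPD matrices, whose Jacobian on the symmetric parameterization is $|A|^{(n+1)/2}$.

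Collecting factors, the integral over $T$ reduces to
\[
|A|^{-(p-n-1)/2}\cdot |A|^{-(n+1)/2}\int g(T')|T'|^{(p-n-1)/2}\,dT' \;=\; |A|^{-p/2}\cdot C,
\]
where $C$ is a constant independent of $H$ (and of $\Sigma$). Hence the marginal density of $H$ is proportional to $|H^T\Sigma^{-1}H|^{-p/2}$; matching the leading prefactor yields the MACG form $|\Sigma|^{-n/2}|H^T\Sigma^{-1}H|^{-p/2}$. For the second claim, doubly-orthogonal invariance of $Z$ forces its density to be a function of $Z^TZ$ alone; writing $Y = BZ$ with $BB^T = \Sigma$ and applying the ordinary change-of-variables $(dY) = |\Sigma|^{n/2}(dZ)$, I recover $f_Y(Y) = |\Sigma|^{-n/2}\tilde g(Y^T\Sigma^{-1}Y)$ of precisely the hypothesized form, so the first part applies directly to the orientation of $Y$.

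The main obstacle is essentially bookkeeping. The polar Jacobian $|T|^{(p-n-1)/2}$ and the symmetric-bilinear Jacobian $|B|^{n+1}$ for $S\mapsto BSB^T$ (not $|B|^n$, as one would naively use for unrestricted $n\times n$ matrices) must conspire so that the exponents of $|A|$ add to exactly $-p/2$; any miscount breaks the cancellation and leaves spurious $H$-dependence in $C$. Once these two Jacobian facts are in hand (either by expanding $S$ in an eigenbasis or invoking standard multivariate-analysis identities), the rest of the argument is mechanical: the spectral invariance of $g$ separates $H$ from $T'$, and $T'$ integrates out to a constant absorbed into the normalization on $V_{n,p}$.
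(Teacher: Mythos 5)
This lemma is one of the prerequisite results the paper explicitly lists \emph{without} proof, deferring to \cite[pp.~41--44]{chikuse2003statistics}; so there is no in-paper argument to compare against. Your derivation is correct and is in fact the standard one from that reference: polar-decompose $Z=HT^{1/2}$ with Jacobian $2^{-n}|T|^{(p-n-1)/2}$, use right-orthogonal invariance to reduce $g$ to a spectral function so that $g(T^{1/2}AT^{1/2})=g(A^{1/2}TA^{1/2})$ with $A=H^T\Sigma^{-1}H$ (positive definite since $H$ has orthonormal columns and $\Sigma\succ 0$, so $A^{1/2}$ exists), and then integrate $T$ out via $T\mapsto A^{1/2}TA^{1/2}$. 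The two Jacobian facts you flag are exactly the delicate points, and you have them right: the symmetric-cone transformation $S\mapsto BSB^T$ contributes $|B|^{n+1}$, so the exponents $-(p-n-1)/2-(n+1)/2=-p/2$ combine as needed, and the leftover integral of $g(T')|T'|^{(p-n-1)/2}$ is an $H$- and $\Sigma$-free constant fixed by normalization of the density on $V_{n,p}$. The second claim also goes through as you state it: bi-invariance forces $f_Z(Z)=h(Z^TZ)$ with $h$ conjugation-invariant, and the linear change of variables $Y=BZ$ with $|\det B|^n=|\Sigma|^{n/2}$ puts $f_Y$ in the hypothesized form, after which the first part applies. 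In short, a complete and correct proof of a result the paper only cites.
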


When $n=1$, the MACG distribution becomes the angular central Gaussian distribution, a description of the multivariate Gaussian distribution on the unit sphere \cite{watson1983statistics}.

\begin{lemma} \label{lemma:Chikuse2}
\cite[Page 70, Decomposition of the Stiefel manifold]{chikuse2003statistics} 
Let $H$ be a $p\times n$ random matrix on $V_{n,p}$, and write
\begin{align*}
  H = (H_1~ H_2),
\end{align*}
with $H_1$ being a $p\times q$ matrix where $0<q<n$. Then we can write
\begin{align*}
  H_2 = G(H_1)U_1,
\end{align*}
where $G(H_1)$ is any matrix chosen so that $(H_1~ G(H_1))\in \mathcal{O}(p)$; as $H_2$ runs over $V_{n-q,p}$, $U_1$ runs over $V_{n-q, p-q}$ and the relationship is one to one. The differential form $[dH]$ for the normalized invariant measure on $V_{n, p}$ is decomposed as the product
\begin{align*}
  [dH] = [dH_1][dU_1]
\end{align*}
of those $[dH_1]$ and $[dU_1]$ on $V_{q,p}$ and $V_{n-q, p-q}$, respectively.
\end{lemma}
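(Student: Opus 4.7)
The plan is to first verify the parametrization $H_2 = G(H_1)U_1$ as a bijection between the set of $H_2$ completing a given $H_1$ to an element of $V_{n,p}$ and the manifold $V_{n-q,p-q}$, and then deduce the measure decomposition from the invariance characterization of the Haar measure on $V_{n,p}$.

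First I would fix $H_1 \in V_{q,p}$ and analyze what the constraint $H = (H_1 \; H_2) \in V_{n,p}$ imposes on $H_2$. Expanding $H^T H = I_n$ in block form yields three conditions: $H_1^T H_1 = I_q$ (automatic), $H_1^T H_2 = 0$, and $H_2^T H_2 = I_{n-q}$. The first non-trivial condition says each column of $H_2$ lies in the $(p-q)$-dimensional subspace $\mathrm{span}(H_1)^{\perp}$. Since the columns of $G(H_1)$ form an orthonormal basis of this subspace, there is a unique $(p-q)\times(n-q)$ matrix $U_1$ with $H_2 = G(H_1)U_1$. Substituting into the last condition gives $U_1^T G(H_1)^T G(H_1) U_1 = U_1^T U_1 = I_{n-q}$, so $U_1 \in V_{n-q,p-q}$, and the correspondence $H_2 \leftrightarrow U_1$ (with $H_1$ held fixed) is one-to-one.

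For the measure decomposition, I would exploit the characterization of $[dH]$ as the unique probability measure on $V_{n,p}$ invariant under the left $\mathcal{O}(p)$ and right $\mathcal{O}(n)$ actions. The pushforward of $[dH]$ under $H \mapsto (H_1,U_1)$ can be written as a product by a Fubini-type argument: the marginal on $H_1$ is $\mathcal{O}(p)$-invariant and hence coincides with $[dH_1]$ on $V_{q,p}$; conditional on $H_1$, invariance of $[dH]$ under right multiplication by the block-diagonal orthogonals $\mathrm{diag}(I_q,R)$, $R \in \mathcal{O}(n-q)$, transports to invariance of the conditional law of $U_1$ under right multiplication by $R$, and left multiplication of $H$ by an orthogonal fixing $H_1$ induces a left $\mathcal{O}(p-q)$ action on $U_1$. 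These two invariances together force the conditional distribution to be the normalized Haar measure $[dU_1]$ on $V_{n-q,p-q}$, yielding $[dH] = [dH_1]\,[dU_1]$.

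The main obstacle I anticipate is showing that the construction is independent of the particular choice of $G(H_1)$. Since $G(H_1)$ is only determined up to right multiplication by some $R \in \mathcal{O}(p-q)$, the corresponding $U_1$ changes to $R^T U_1$, so the product-measure identity hinges on the left-invariance of the Haar measure on the auxiliary Stiefel manifold $V_{n-q,p-q}$, which is precisely what the bi-invariance argument above supplies. Aside from this measure-theoretic subtlety, the remaining work amounts to a Jacobian computation in local coordinates, which is carried out in detail in \cite{chikuse2003statistics}.
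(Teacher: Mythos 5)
The paper itself does not prove this lemma: it is quoted as a prerequisite from Chikuse's monograph, where the decomposition is obtained by an explicit exterior-differential-form (Jacobian) computation. Your argument is therefore a genuinely different, and essentially correct, route. The algebraic part is complete: expanding $H^TH=I_n$ in blocks gives $H_1^TH_2=0$ and $H_2^TH_2=I_{n-q}$, so the columns of $H_2$ lie in $\mathrm{span}(H_1)^\perp$ and are coordinatized uniquely by $U_1=G(H_1)^TH_2\in V_{n-q,p-q}$. For the measure statement, your invariance argument works because both relevant spaces are compact homogeneous spaces with unique invariant probability measures: $V_{q,p}$ under the left $\mathcal{O}(p)$ action (giving the marginal of $H_1$), and the fiber over a fixed $H_1$ under the pointwise stabilizer of $H_1$, which acts on $\mathrm{span}(H_1)^\perp$ as a copy of $\mathcal{O}(p-q)$ and acts transitively on $V_{n-q,p-q}$ since $n-q\le p-q$ (giving the conditional of $U_1$, hence independence because the conditional does not depend on $H_1$). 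Two points deserve tightening. First, conditional distributions are only defined up to null sets, so the cleanest way to finish is to run the argument forwards: draw $H_1$ and $U_1$ independently from their Haar measures, set $H=(H_1\;G(H_1)U_1)$ for a measurable selection $H_1\mapsto G(H_1)$, verify that the law of $H$ is invariant under the left $\mathcal{O}(p)$ and right $\mathcal{O}(n)$ actions (using exactly the equivariance $PG(H_1)=G(PH_1)R_{P,H_1}$ you identified), and invoke uniqueness of the Haar probability measure on $V_{n,p}$; this also disposes of the $G(H_1)$-ambiguity in one stroke. Second, your closing deferral to ``a Jacobian computation in local coordinates'' is unnecessary once the invariance argument is completed -- the uniqueness of invariant measures replaces it entirely -- whereas if you did follow Chikuse's route that computation would be the entire proof, so the hedge should be removed. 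What your approach buys is a coordinate-free proof of the product structure; what Chikuse's buys is the explicit (unnormalized) volume forms, which the present paper does not need.
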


\begin{lemma}\label{lemma:u}[Lemma 4 in \cite{fan2008sure}]
Let $U$ be uniformly distributed on the Stiefel manifold $V_{n,p}$. Then for any $C>0$, there exist $c_1', c_2'$ with $0<c_1'<1<c_2'$, such that
\begin{equation*}
P\bigg(e_1^TUU^Te_1<c_1'\frac{n}{p}\bigg)\leq 2e^{-Cn},
\end{equation*}
and
\begin{equation*}
P\bigg(e_1^TUU^Te_1>c_2'\frac{n}{p}\bigg)\leq 4e^{-Cn}.
\end{equation*}
\end{lemma}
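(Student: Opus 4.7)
The plan is to reduce the Stiefel-manifold statement to a concentration inequality for the ratio of two independent chi-square-type sums, for which Proposition \ref{prop:1} applies directly. First I would invoke the Gaussian representation of the uniform measure on $V_{n,p}$: if $Z$ is a $p\times n$ matrix with iid $N(0,1)$ entries, then by Lemma \ref{Lemma:Chikuse1} with $\Sigma = I_p$ (using that the density of $Z$ is bi-orthogonally invariant), $U := Z(Z^TZ)^{-1/2}$ is uniform on $V_{n,p}$. Because $UU^T = Z(Z^TZ)^{-1}Z^T$ is the orthogonal projection onto the column space $V$ of $Z$, we have $e_1^T UU^T e_1 = \|P_V e_1\|^2$.

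Next I would use rotational invariance to obtain an explicit distributional identity. Since the law of $Z$ is invariant under left multiplication by any $O\in\mathcal{O}(p)$, the random subspace $V$ is uniformly distributed among $n$-dimensional subspaces of $\mathbb{R}^p$. Letting $V_0 = \mathrm{span}(e_1,\ldots,e_n)$ and $v$ be uniform on the sphere $S^{p-1}$, a change of variables via a Haar-random orthogonal transformation gives
\[
\|P_V e_1\|^2 \;\stackrel{d}{=}\; \|P_{V_0} v\|^2.
\]
Writing $v = g/\|g\|$ with $g \sim N(0,I_p)$, this yields the clean representation
\[
e_1^T UU^T e_1 \;\stackrel{d}{=}\; \frac{\sum_{i=1}^n g_i^2}{\sum_{i=1}^p g_i^2}.
\]

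The remaining step is a two-sided concentration argument for this ratio around its mean $n/p$. Let $S_n = \sum_{i=1}^n g_i^2$ and $S_p = \sum_{i=1}^p g_i^2$. By Proposition \ref{prop:1}, for any $C>0$ there exist constants $0<c_3<1<c_4$ such that each of the events $\{S_n \leq c_3 n\}$, $\{S_n \geq c_4 n\}$, $\{S_p \leq c_3 p\}$, $\{S_p \geq c_4 p\}$ has probability at most $e^{-Cn}$ (the last two use $p>n$, so $e^{-Cp}\leq e^{-Cn}$). A union bound then shows that with probability at least $1-2e^{-Cn}$,
\[
\frac{c_3}{c_4}\cdot\frac{n}{p} \;\leq\; \frac{S_n}{S_p} \;\leq\; \frac{c_4}{c_3}\cdot\frac{n}{p},
\]
so setting $c_1' = c_3/c_4$ and $c_2' = c_4/c_3$ delivers both claimed inequalities (the stated constants $2e^{-Cn}$ and $4e^{-Cn}$ are, if anything, a bit loose).

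The only genuinely nontrivial ingredient is the rotational-invariance step: the identity $\|P_V e_1\|^2 \stackrel{d}{=} \|P_{V_0}v\|^2$ follows because the joint distribution of $(V, e_1)$ equals, in law, $(OV_0, Oe_1)$ for a Haar-random $O\in\mathcal{O}(p)$, and integrating out the rotations that fix $V_0$ then reduces this to $(V_0, v)$ with $v$ uniform on $S^{p-1}$. Once this reduction is in place, everything else is routine bookkeeping with Proposition \ref{prop:1} and a union bound, and no further obstacle arises.
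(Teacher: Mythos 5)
Your proof is correct, and it is essentially the standard argument behind this result: the paper itself states Lemma \ref{lemma:u} without proof, citing Lemma 4 of \cite{fan2008sure}, whose proof likewise reduces $e_1^TUU^Te_1$ to the chi-square ratio $\sum_{i=1}^n g_i^2/\sum_{i=1}^p g_i^2$ via rotational invariance and then applies chi-square deviation bounds. The only nit is bookkeeping: your simultaneous two-sided bound requires all four deviation events to fail, so it holds with probability $1-4e^{-Cn}$ rather than $1-2e^{-Cn}$, but each one-sided inequality of the lemma separately needs only two events and thus gets $2e^{-Cn}$, which is within the stated constants.
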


Some of our proof requires concentration properties of a random Gaussian matrix and $\mathcal{X}_1^2$ random variables. For a Wigner matrix, we have the following result.
\begin{lemma}\label{lemma:wigner}
  Assume $Z$ is a $n\times p$ matrix with $p>c_0n$ for some $c_0>1$. Each entry of $Z$ follows a Gaussian distribution with mean zero and variance one and are independent. Then for any $t>0$, with probability at least $1- 2\exp(-t^2/2)$, we have
  \begin{align*}
    (1-c_0^{-1}-t/p)^2\leq \lambda_{min}(ZZ^T/p)<\lambda_{max}(ZZ^T/p)\leq (1+c_0^{-1}+t/p)^2.
  \end{align*}
For any $C>0$, taking $t = \sqrt{2Cn}$, we have with probability $1- 2\exp(-Cn/2)$,
\begin{align*}
  (1-c_0^{-1} - \frac{\sqrt{2C}}{c_0\sqrt n})^2\leq \lambda_{min}(ZZ^T/p) \leq (1 + c_0^{-1} + \frac{\sqrt{2C}}{c_0\sqrt n})^2.
\end{align*}
\end{lemma}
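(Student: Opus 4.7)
The plan is to derive the claim from the standard Davidson--Szarek bound on the extreme singular values of a rectangular Gaussian matrix, then rescale. Since $Z$ is $n\times p$ with $p\geq n$, we have $\lambda_{max}(ZZ^T/p)=s_{max}(Z)^2/p$ and $\lambda_{min}(ZZ^T/p)=s_{min}(Z)^2/p$, where $s_{max}$ and $s_{min}$ denote the largest and smallest singular values of $Z$. It therefore suffices to prove the two-sided singular value bound
$$\sqrt p-\sqrt n-t\leq s_{min}(Z)\leq s_{max}(Z)\leq \sqrt p+\sqrt n+t$$
on an event of probability at least $1-2e^{-t^2/2}$, after which the claimed inequality on eigenvalues follows by dividing by $\sqrt p$ and squaring.

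First I would invoke Gordon's Gaussian min-max comparison theorem, applied to the dual representations $s_{max}(Z)=\sup_{u\in S^{n-1},v\in S^{p-1}}u^TZv$ and $s_{min}(Z)=\inf_{u\in S^{n-1}}\sup_{v\in S^{p-1}}u^TZv$ (the minimax form for $s_{min}$ is valid because $p\geq n$), to obtain the mean bounds $\mathbb{E}\,s_{max}(Z)\leq\sqrt p+\sqrt n$ and $\mathbb{E}\,s_{min}(Z)\geq\sqrt p-\sqrt n$. Second, I would apply Gaussian Lipschitz concentration (the Borell--TIS inequality) to the maps $Z\mapsto s_{max}(Z)$ and $Z\mapsto s_{min}(Z)$, both of which are $1$-Lipschitz with respect to the Frobenius norm, to obtain deviations of order $e^{-t^2/2}$ for each singular value around its mean. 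A union bound over the two events then yields the displayed singular value sandwich with probability at least $1-2e^{-t^2/2}$.

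Third, rescaling (dividing the Davidson--Szarek inequality by $\sqrt p$ and squaring) gives simultaneous control of $\lambda_{min}(ZZ^T/p)$ and $\lambda_{max}(ZZ^T/p)$ by $(1\pm\sqrt{n/p}\pm t/\sqrt p)^2$; the hypothesis $p>c_0 n$ bounds $\sqrt{n/p}$ above by $c_0^{-1/2}$, which yields the stated eigenvalue bounds. The second assertion then specializes by setting $t=\sqrt{2Cn}$, so that $e^{-t^2/2}=e^{-Cn}$ and $t/\sqrt p<\sqrt{2C}/(c_0^{1/2}\sqrt n)$, giving the claimed form. The only delicate step is the appeal to Gordon's comparison theorem for the lower tail of $s_{min}$; everything else is a routine application of Lipschitz concentration together with elementary algebra on the resulting sandwich.
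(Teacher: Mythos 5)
Your route is essentially the paper's own: the authors' entire proof is the citation ``this is essentially Corollary 5.35 in \cite{vershynin2010introduction}'', which is exactly the Davidson--Szarek two-sided singular value bound you write down, and whose standard proof is the Gordon-comparison-plus-Gaussian-Lipschitz-concentration argument you sketch. Up to and including the sandwich $\sqrt p-\sqrt n-t\leq s_{min}(Z)\leq s_{max}(Z)\leq\sqrt p+\sqrt n+t$ with probability $1-2e^{-t^2/2}$, your argument is correct and is the intended one.

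The genuine problem is the last step, where you assert that rescaling ``yields the stated eigenvalue bounds.'' Dividing by $\sqrt p$ and squaring gives $(1-\sqrt{n/p}-t/\sqrt p)^2\leq\lambda_{min}(ZZ^T/p)$ and $\lambda_{max}(ZZ^T/p)\leq(1+\sqrt{n/p}+t/\sqrt p)^2$, and $p>c_0n$ only gives $\sqrt{n/p}<c_0^{-1/2}$. The lemma as printed has $c_0^{-1}$ where the argument produces $c_0^{-1/2}$, and $t/p$ where it produces $t/\sqrt p$; since $c_0^{-1}<c_0^{-1/2}$ and $t/p<t/\sqrt p$, the printed lower bound on $\lambda_{min}$ is strictly \emph{larger} than the one that is provable, so it does not follow from your derivation (nor from the paper's citation). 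The same substitution propagates into the second display: with $t=\sqrt{2Cn}$ one gets $t/\sqrt p<\sqrt{2C/c_0}$, a constant, whereas your claimed inequality $t/\sqrt p<\sqrt{2C}/(c_0^{1/2}\sqrt n)$ is false by a factor of $\sqrt n$ (it is what you would get from $t/p$, not $t/\sqrt p$). This is almost certainly a typo inherited from the lemma statement rather than a defect of your method, but the sentence claiming the constants match is precisely where the proof breaks; the correct conclusion is $(1-c_0^{-1/2}-\sqrt{2C/c_0})^2\leq\lambda_{min}(ZZ^T/p)\leq\lambda_{max}(ZZ^T/p)\leq(1+c_0^{-1/2}+\sqrt{2C/c_0})^2$, and the lower bound additionally requires the quantity inside the square to be nonnegative, a caveat worth stating.
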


\begin{proof}
  This is essentially Corollary 5.35 in \cite{vershynin2010introduction}.
\end{proof}

The conditional number of $\Sigma$ is controled by $\kappa$, which simulaneously controls the largest and the smallest eigenvalues. 
\begin{Prop}\label{prop:2}
  Assume the conditional number of $\Sigma$ is $\kappa$ and $\Sigma_{ii} = 1$ for $i = 1,2, \cdots, p$, then we have
  \begin{align*}
    \lambda_{min}(\Sigma) \geq \kappa^{-1}\qquad\mbox{and}\qquad \lambda_{max}(\Sigma) \leq \kappa.
  \end{align*}
\end{Prop}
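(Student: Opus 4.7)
The plan is to exploit the trace constraint forced by the assumption that all diagonal entries of $\Sigma$ are equal to one. Since $\Sigma$ is positive definite and symmetric, it admits a spectral decomposition whose eigenvalues $\lambda_1 \geq \lambda_2 \geq \cdots \geq \lambda_p > 0$ satisfy $\sum_{i=1}^p \lambda_i = \operatorname{tr}(\Sigma) = p$. Dividing by $p$, the average eigenvalue equals one, which immediately implies the two-sided sandwich $\lambda_{\min}(\Sigma) \leq 1 \leq \lambda_{\max}(\Sigma)$.

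Next, I would invoke the definition of the condition number, namely $\kappa = \lambda_{\max}(\Sigma)/\lambda_{\min}(\Sigma)$, and combine it with the sandwich just derived. From $\lambda_{\min}(\Sigma) \leq 1$, multiplying both sides by $\kappa$ gives $\lambda_{\max}(\Sigma) = \kappa \lambda_{\min}(\Sigma) \leq \kappa$. Symmetrically, from $\lambda_{\max}(\Sigma) \geq 1$, dividing by $\kappa$ gives $\lambda_{\min}(\Sigma) = \lambda_{\max}(\Sigma)/\kappa \geq \kappa^{-1}$. These are exactly the two claimed bounds.

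There is no real obstacle here; the whole statement reduces to combining the trace identity with the definition of $\kappa$. The only subtlety worth mentioning explicitly in the write-up is that $\Sigma$ being positive definite guarantees $\lambda_{\min}(\Sigma) > 0$, so the ratio defining $\kappa$ is well-defined and the inequalities above can be multiplied/divided without sign issues.
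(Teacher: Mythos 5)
Your argument is correct and is essentially the paper's own proof: both rest on the observation that $\operatorname{tr}(\Sigma)=p$ forces the eigenvalues to average to one, giving $\lambda_{\min}(\Sigma)\leq 1\leq\lambda_{\max}(\Sigma)$, which combined with $\kappa=\lambda_{\max}(\Sigma)/\lambda_{\min}(\Sigma)$ yields both bounds. Your write-up is in fact cleaner than the paper's somewhat terse display of the same computation.
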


\begin{proof}
  Notice that $p = tr(\Sigma) = \sum_{i = 1}^p\lambda_i$. Therefore, we have
  \begin{align*}
    p/\lambda_{max}\geq p\kappa^{-1}\quad\mbox{and}\quad p/\lambda_{min}(\Sigma) \leq p\kappa,
  \end{align*}
which completes the proof.
\end{proof}
\par
~\\

Now we prove the main results for HOLP.

\begin{proof}[\textbf{Proof of Lemma \ref{lemma:5}}]
 Consider a transformed $n\times p$ random matrix $Z = X\Sigma^{-1/2}$, which, by definition, follows standard multivariate Gaussian. Consider its SVD decomposition,
  \begin{align*}
    Z = VDU^T,
  \end{align*}
where $V\in \mathcal{O}(n)$, $D$ is a diagonal matrix and $U$ is a $p\times n$ random matrix belonging to the Stiefel manifold $V_{n,p}$. With such notion, we can rewrite the projection matrix as
\begin{align*}
  X^T(XX^T)^{-1}X = \Sigma^{1/2}U(U^T\Sigma U)^{-1}U^T\Sigma^{1/2} = HH^T,
\end{align*}
where $H = \Sigma^{1/2}U(U^T\Sigma U)^{-1/2}$ and $H\in V_{n, p-1}$. Therefore, the two quantities that we are interested in are $\Phi_{ii} = e_i^THH^Te_i$ (diagonal term) and $\Phi_{ij} = e_i^THH^Te_j$ (off-diagonal term), where $e_i^T$ is the $p-$dimensional unit vector with the $i^{th}$ coordinate being one. The proof is divided into two parts, where in the first part we consider diagonal terms and the second part takes care of off-diagonal terms.
\par
\textbf{Part I:}
First, we consider the diagonal term $e_i^THH^Te_i$. Recall the definition of $H$ and
\[
e_i^THH^Te_i = e_i^T\Sigma^{\frac{1}{2}} U(U^T\Sigma U)^{-1}U^T\Sigma^{\frac{1}{2}} e_i.
\]
There always exists some orthogonal matrix $Q$ that rotates the vector $\Sigma^{\frac{1}{2}}e_i$ to the direction of $e_1$, i.e,
\[
\Sigma^{\frac{1}{2}}v = \|\Sigma^{\frac{1}{2}}v\|Qe_1.
\]
Then we have
\[
e_i^THH^Te_i = \|\Sigma^{\frac{1}{2}}e_i\|^2e_1^TQ^TU(U^T\Sigma U)^{-1}U^TQe_1 = \|\Sigma^{\frac{1}{2}}v\|^2 e_1^T\tilde U(U^T\Sigma U)^{-1}\tilde Ue_1,
\]
where $\tilde U = Q^TU$ is uniformly distributed on $V_{n,p}$, because $U$ is uniformly distributed on $V_{n,p}$ (see discussion in the beginning). Now the magnitude of $e_i^THH^Te_I$ can be evaluated in two parts. For the norm of the vector $\Sigma^{\frac{1}{2}}v$, we have
\begin{equation}
\lambda_{min}(\Sigma)\leq e_i^T\Sigma e_i = \|\Sigma^{\frac{1}{2}}e)i\|^2\leq \lambda_{max}(\Sigma) \label{eq:lemma4},
\end{equation}
and for the remaining part,
\[
e_1^T\tilde U(U^T\Sigma  U)^{-1}\tilde Ue_1 \leq \lambda_{max}((U^T\Sigma U)^{-1})\|\tilde Ue_1\|^2\leq \lambda_{min}(\Sigma)^{-1}\|\tilde Ue_1\|^2,
\]
and
\[
e_1^T\tilde U(U^T\Sigma U)^{-1}\tilde Ue_1 \geq \lambda_{min}((U^T\Sigma U)^{-1})\|\tilde Ue_1\|^2\geq \lambda_{max}(\Sigma)^{-1}\|\tilde Ue_1\|^2.
\]
Consequently, we have
\begin{equation}
e_i^THH^Te_i\leq \frac{\lambda_{max}(\Sigma)}{\lambda_{min}(\Sigma)}e_1^TUU^Te_1,\qquad e_i^THH^Te_i\geq \frac{\lambda_{min}(\Sigma)}{\lambda_{max}(\Sigma)}e_1^TUU^Te_1.\label{eq:lemma4.1}
\end{equation}
Therefore, following Proposition \ref{lemma:u}, for any $C>0$ we have
\[
P\bigg(e_i^THH^Te_i< c_1'c_4\kappa^{-1}\frac{n}{p}\bigg)\leq 2e^{-Cn},
\]
and
\[
P\bigg(e_i^THH^Te_i> c_2'c_4^{-1}\kappa\frac{n^{1}}{p}\bigg)\leq 2e^{-Cn}.
\]
Denoting $c_1'c_4$ by $c_1$ and $c_2'c_4^{-1}$ by $c_2$, we obtain the equation in Lemma \ref{lemma:5}.

\par
\textbf{Part II:}
Second, for off-diagonal terms, although the proof is almost identical to the proof of Lemma 5 in \cite{wang2014high}, we still provide a complete version here due to the importance of this result.
\par
The proof depends on the decomposition of Stiefel manifold. 
Without loss of generality, we prove the bound only for $e_2^THH^Te_1$, then the other off-diagonal terms should follow exactly the same argument. According to Lemma \ref{lemma:Chikuse2}, we can decompose $H = (T_1, H_2)$ with $T_1 = G(H_2)H_1$, where $H_2$ is a $p\times (n-1)$ matrix, $H_1$ is a $(p-n + 1)\times 1$ vector and $G(H_2)$ is a matrix such that $(G(H_2), H_2) \in \mathcal{O}(p)$. The invariant measure on the Stiefel manifold can be decomposed as
\begin{align*}
  [H] = [H_1][H_2]
\end{align*}
where $[H_1]$ and $[H_2]$ are Haar measures on $V_{1,n-p+1}, V_{n-1,p}$ (Notice that $q = n - 1$ in this decomposition) respectively. As pointed out before, $H$ has the $MACG(\Sigma)$ distribution, which possesses a density as
\begin{align*}
  p(H) \propto |H^T\Sigma^{-1}H|^{-p/2}[dH].
\end{align*}
Using the identity for matrix determinant
\begin{align*}
  \begin{vmatrix}
    A & B\\
    C & D
  \end{vmatrix}
 = |A||D - CA^{-1}B| = |D||A - BD^{-1}C|,
\end{align*}
we have
\begin{align*}
  P(H_1,H_2)& \propto |H_2^T\Sigma^{-1}H_2|^{-p/2}(T_1^T\Sigma^{-1}T_1 - T_1^T\Sigma^{-1}H_2(H_2^T\Sigma^{-1}H_2)^{-1}H_2^T\Sigma^{-1}T_1)^{-p/2}\\
 &= |H_2^T\Sigma^{-1}H_2|^{-p/2}(H_1^TG(H_2)^T(\Sigma^{-1} - \Sigma^{-1}H_2(H_2^T\Sigma^{-1}H_2)^{-1}H_2^T\Sigma^{-1})G(H_2)H_1)^{-p/2}\\
& = |H_2^T\Sigma^{-1}H_2|^{-p/2}(H_1^TG(H_2)^T\Sigma^{-1/2}(I - T_2)\Sigma^{-1/2}G(H_2) H_1)^{-p/2},
\end{align*}
where $T_2 = \Sigma^{-1/2}H_2(H_2^T\Sigma^{-1} H_2)^{-1}H_2^T\Sigma^{-1/2}$ is an orthogonal projection onto the linear space spanned by the columns of $\Sigma^{-1/2}H_2$. It is easy to verify the following result by using the definition of $G(H_2)$,
\begin{align*}
  [\Sigma^{1/2}G(H_2)(G(H_2)^T\Sigma G(H_2))^{-1/2}, ~\Sigma^{-1/2}H_2(H_2^T\Sigma^{-1} H_2)^{-1/2}] \in \mathcal{O}(p),
\end{align*}
and therefore we have
\begin{align*}
  I - T_2 = \Sigma^{1/2}G(H_2)(G(H_2)^T\Sigma G(H_2))^{-1}G(H_2)^T\Sigma^{1/2},
\end{align*}
which simplifies the density function as
\begin{align*}
  P(H_1,H_2)\propto |H_2^T\Sigma^{-1}H_2|^{-p/2}(H_1^T (G(H_2)^T\Sigma G(H_2))^{-1} H_1)^{-p/2}.
\end{align*}
Now it becomes clear that $H_1|H_2$ follows the Angular Central Gaussian distribution $ACG(\Sigma')$, where
\begin{align*}
  \Sigma' = G(H_2)^T\Sigma G(H_2).
\end{align*}
\par
Next, we relate the target quantity $e_1^THH^Te_2$ to the distribution of $H_1$. Notice that for any orthogonal matrix $Q\in\mathcal{O}(n)$, we have
\begin{align*}
  e_1^THH^Te_2 = e_1^THQQ^TH^Te_2 = e_1^TH'H^{'T}e_2.
\end{align*}
Write $H' = HQ = (T_1', H_2')$, where $T_1' = [T_1^{'(1)}, T_1^{'(2)},\cdots, T_1^{'(p)}], ~H_2' = [H_2^{'(i,j)}]$. If we choose $Q$ such that the first row of $H_2'$ are all zero (this is possible as we can choose the first column of $Q$ being the first row of $H$ upon normalizing), i.e.,
\begin{align*}
  e_1^TH' = [T_1^{'(1)},~0,\cdots,0]\qquad e_2^TH' = [T_1^{'(2)},~ H_2^{'(2,1)},\cdots,~H_2^{'(2,n-1)}],
\end{align*}
then immediately we have $e_1^THH^Te_2 = e_1^TH'H^{'T}e_2 = T_1^{'(1)}T_1^{'(2)}$. This indicates that
\begin{align*}
  e_1^THH^Te_2 \stackrel{(d)}{=} \quad T_1^{(1)}T_1^{(2)}~ \bigg| ~e_1^TH_2 = 0.
\end{align*}
\par
As shown at the beginning, $H_1$ follows $ACG(\Sigma')$ conditional on $H_2$. Let $H_1 = (h_1,h_2,\cdots,h_p)^T$ and let $x^T = (x_1,x_2,\cdots,x_{p-n+1})\sim N(0,\Sigma')$, then we have
\begin{align*}
  h_i\stackrel{(d)}{=} \frac{x_i}{\sqrt{x_1^2+\cdots+x_{p-n+1}^2}}.
\end{align*}
Notice that $T_1 = G(H_2)H_1$, a linear transformation on $H_1$. Defining $y = G(H_2)x$, we have
\begin{align}
  T_1^{(i)}\stackrel{(d)}{=} \frac{y_i}{\sqrt{y_1^2+\cdots+y_{p}^2}},\label{eq:y}
\end{align}
where $y\sim N(0, G(H)\Sigma'G(H)^T)$ is a degenerate Gaussian distribution. This degenerate distribution contains an interesting form. Letting $z\sim N(0, \Sigma)$, we know $y$ can be expressed as $y = G(H)G(H)^Tz$. Write $G(H_2)^T$ as $[g_1,g_2]$ where $g_1$ is a $(p-n+1)\times 1$ vector and $g_2$ is a $(p-n+1)\times (p-1)$ matrix, then we have
\begin{align*}
  G(H_2)G(H_2)^T =
  \begin{pmatrix}
    g_1^Tg_1 & g_1^Tg_2\\
    g_2^Tg_1 & g_2^Tg_2
  \end{pmatrix}
.
\end{align*}
We can also write $H_2^T = [0_{n-1,1}, h_2]$ where $h_2$ is a $(n-1)\times (p-1)$ matrix, and using the orthogonality, i.e., $[H_2~G(H_2)][H_2~G(H_2)]^T = I_p$, we have
\begin{align*}
  g_1^Tg_1 = 1, ~g_1^Tg_2 = 0_{1,p-1}\quad \mbox{and}\quad g_2^Tg_2 = I_{p-1} - h_2h_2^T.
\end{align*}
Because $h_2$ is a set of orthogonal basis in the $p-1$ dimensional space, $g_2^Tg_2$ is therefore an orthogonal projection onto the space $\{h_2\}^{\perp} $ and $g_2^Tg_2 = AA^T$ where $A = g_2^T(g_2g_2^T)^{-1/2}$ is a $(p-1)\times (p-n)$ orientation matrix on $\{h_2\}^{\perp}$. Together, we have
\begin{align*}
  y =
  \begin{pmatrix}
    1 & 0\\
    0 & AA^T
  \end{pmatrix}
z.
\end{align*}
This relationship allows us to marginalize $y_1$ out with $y$ following a degenerate Gaussian distribution.
\par
 We now turn to transform the condition $e_1^TH_2 = 0$ onto constraints on the distribution of $T_1^{(i)}$. Letting $t_1^2 = e_1^THH^Te_1$, then $e_1^TH_2 = 0$ is equivalent to $T_1^{(1)2} = e_1^THH^Te_1 = t_1^2$, which implies that
\begin{align*}
  e_1^THH^Te_2 \stackrel{(d)}{=} \quad T_1^{(1)}T_1^{(2)}~ \bigg|~ T_1^{(1)2} = e_1^THH^Te_1.
\end{align*}
Because the magnitude of $e_1^THH^Te_1$ has been obtained in Part I, we can now condition on the value of $e_1^THH^Te_1$ to obtain the bound on $T_1^{(2)}$. From $T_1^{(1)2} = t_1^2$, we obtain that,
\begin{align}
  (1 - t_1^2) y_1^2 = t_1^2(y_2^2 + y_3^2 +\cdots + y_{p}^2).\label{eq:cons}
\end{align}
Notice this constraint is imposed on the norm of $\tilde y = (y_2,~y_3,\cdots,y_{p})$ and is thus independent of $(y_2/\|\tilde y\|,\cdots, y_{p}/\|\tilde y\|)$. Equation \eqref{eq:cons} also implies that 
\begin{align}
(1-t_1^2)(y_1^2 + y_2^2 + \cdots + y_p^2) = y_2^2 + y_3^2 + \cdots + y_p^2.\label{eq:norm}
\end{align}
Therefore, combining \eqref{eq:y} with \eqref{eq:cons}, \eqref{eq:norm} and integrating $y_1$ out, we have
\begin{align*}
  T_1^{(i)}~|~T_1^{(1)}=t_1 &~\stackrel{(d)}{=} \frac{\sqrt{1-t_1^2}y_i}{\sqrt{y_2^2+\cdots+y_{p}^2}},\qquad i = 2,3,\cdots,p,
\end{align*}
where $(y_2,y_3,\cdots,y_{p})\sim N(0, AA^T \Sigma_{22} AA^T)$ with $\Sigma_{22}$ being the covariance matrix of $z_2,\cdots, z_p$. 
\par
To bound the numerator, we use the classical tail bound on the normal distribution as for any $t>0$, ($\sigma_i = \sqrt{var(y_i)}\leq \sqrt{\lambda_{max}(AA^T\Sigma_{22}AA^T)}\leq \lambda_{max}(\Sigma)^{1/2}$),
\begin{align}
  P(|y_i|>t\sigma_i) = P(|y_i|>t\lambda^{\frac{1}{2}}_{max}(\Sigma))\leq 2e^{-t^2/2}.\label{eq:normal2}
\end{align}
For the denominator, letting $\tilde z \sim N(0, I_{p-1})$, we have
\begin{align*}
  \tilde y = AA^T\Sigma_{22}^{1/2}\tilde z
  \quad\mbox{and}\quad \tilde y^T\tilde y = \tilde z^T \Sigma_{22}^{1/2}AA^T\Sigma_{22}^{1/2} \tilde z \stackrel{(d)}{=} \sum_{i = 1}^{p-n}\lambda_i \mathcal{X}^2_i(1),
\end{align*}
where $\mathcal{X}^2_i(1)$ are iid chi-square random variables and $\lambda_i$
are non-zero eigenvalues of matrix
$\Sigma_{22}^{1/2}AA^T\Sigma_{22}^{1/2}$. Here $\lambda_i$'s are naturally upper bounded by $\lambda_{max}(\Sigma)$. To give a lower bound, notice that $\Sigma_{22}^{1/2}AA^T\Sigma_{22}^{1/2}$ and $A\Sigma_{22}A^T$ possess the same set of non-zero eigenvalues, thus
\begin{align*}
  \min_i\lambda_i \geq \lambda_{min}(A\Sigma_{22}A^T) \geq \lambda_{min}(\Sigma).
\end{align*}
Therefore, 
\begin{align*}
  \lambda_{min}(\Sigma)\frac{\sum_{i=1}^{p-n}\mathcal{X}^2_i(1)}{p-n}\leq \frac{\tilde y^T\tilde y}{p-n}\leq  \lambda_{max}(\Sigma)\frac{\sum_{i=1}^{p-n}\mathcal{X}^2_i(1)}{p-n}.
\end{align*}
The quantity $\frac{\sum_{i=1}^{p-n}\mathcal{X}^2_i(1)}{p-n}$ can be bounded
by Proposition \ref{prop:1}. Combining with Proposition \ref{prop:2}, we have for any $C>0$, there exists some $c_3>0$ such that
\begin{align*}
  P\bigg(\tilde y^T\tilde y/(p-n) < c_3\lambda^{\frac{1}{2}}(\Sigma)\bigg)\leq e^{-C(p -n)}.
\end{align*}
Therefore, noticing that $\lambda^{1/2}_{max}(\Sigma)/\lambda^{1/2}_{min}(\Sigma) = \kappa^{1/2}$, $T_1^{(2)}$ can be bounded as
\begin{align*}
  P\bigg(|T_1^{(2)}|> \frac{\sqrt{1 - t_1^2}\kappa^{\frac{1}{2}} t}{\sqrt{c_3}\sqrt{p-n}}~\big|T_1^{(1)} = t_1\bigg)\leq e^{-C(p-n)} + 2e^{-t^2/2}.
\end{align*}
Using the results from the diagonal term, we have
\begin{align*}
  P\bigg(t_1^2>c_2\kappa\frac{n}{p}\bigg)\leq 2e^{-Cn}.\quad\mbox{and}\quad
  P\bigg(t_1^2<c_1\kappa^{-1}\frac{n}{p}\bigg)\leq 2e^{-Cn}.
\end{align*}
Consequently, we have 
\begin{align*}
  P\bigg(|e_1^THH^Te_2|&>c_4\kappa t\frac{\sqrt{n}}{p} \bigg) = P\bigg(|T_1^{(1)}T_1^{(2)}|> c_4\kappa t\frac{\sqrt{n}}{p}~\big| T_1^{(1)} = t_1\bigg) \\
&\leq P\bigg(T_1^{(1)2}>c_2\kappa\frac{n}{p}~|T_1^{(1)} = t_1\bigg) + P\bigg(|T_1^{(2)}|> \frac{\kappa^{\frac{1}{2}} t\sqrt{1 - c_1n/p}}{\sqrt{c_3}\sqrt{p-n}}~\big|T_1^{(1)} = t_1\bigg)\\
&\leq 5e^{-Cn} + 2e^{-t^2/2},
\end{align*}
where $c_4 = \frac{\sqrt{c_2(c_0 - 1)}}{\sqrt{c_3(c_0 - 1)}}$.
\end{proof}

\begin{proof}[\textbf{Proof of Lemma \ref{lemma:6}}]
  Notice that conditioning on $X$, for any fixed index $i$, $e_i^TX^T(XX^T)^{-1}\epsilon$ follows a normal distribution with mean zero and variance $\sigma^2\|e_i^TX^T(XX^T)^{-1}\|_2^2$. We can first bound the variance and then apply the normal tail bound \eqref{eq:normal2} again to obtain an upper bound for the error term.
\par
The variance term follows
\begin{align*}
 \sigma^2e_i^TX^T(XX^T)^{-2}Xe_i \leq \sigma^2\lambda_{max}\big((XX^T)^{-1}\big) e_i^THH^Te_i.
\end{align*}
The $e_i^THH^Te_i$ part can be bounded according to Lemma \ref{lemma:5}, while the first part follows
\begin{align*}
  \lambda_{max}\big((XX^T)^{-1}\big) = \lambda_{max}\big((Z\Sigma Z^T)^{-1}\big)\leq \lambda_{min}^{-1}(ZZ^T)\lambda_{min}^{-1}(\Sigma) = \frac{\kappa}{p}\lambda_{min}^{-1}(p^{-1}ZZ^T).
\end{align*}
Thus, using Lemma \ref{lemma:wigner} and \ref{lemma:5}, we have
\begin{align}
  \sigma^2\|e_i^TX^T(XX^T)^{-1}\|_2^2\leq \frac{4\sigma^2c_2}{(1-c_0^{-1})^2}\frac{n\kappa^{2}}{p^2}, \label{eq:var1}
\end{align}
with probability at least $1-4\exp(-Cn)$ if $n>8C/(c_0-1)^2$.  Now combining \eqref{eq:var1} and \eqref{eq:normal2} we have for any $t>0$,
  \begin{align*}
    P\bigg(|e_i^TX^T(XX^T)^{-1}\epsilon|\geq \frac{2\sigma \sqrt{c_2}\kappa t}{1-c_0^{-1}}\frac{\sqrt{n}}{p}\bigg) < 4e^{-Cn} + 2e^{-t^2/2}.
  \end{align*}
\end{proof}

\begin{proof}[\textbf{Proof of Theorem \ref{thm:7}}]
  The proof depends on Lemma \ref{lemma:5} and \ref{lemma:6}, and a careful choice of the value of $t$ in these two lemmas. We first take union bounds of the two lemmas to obtain 
\begin{align*}
  P(\min_{i\in Q}|\Phi_{ii}|<c_1\kappa^{-1}\frac{n}{p})\leq 2pe^{-Cn},
\end{align*}
\begin{align*}
  P(\max_{i\neq j}|\Phi_{ij}|>c_4\kappa t\frac{\sqrt{n}}{p}) \leq 5(p^2 - p)e^{-Cn} + 2(p^2 - p)e^{-t^2/2},
\end{align*}
and
  \begin{align*}
    P\bigg(\|X^T(XX^T)^{-1}\epsilon\|_\infty \geq \frac{2\sigma \sqrt{c_2}\kappa t}{1-c_0^{-1}}\frac{\sqrt{n}}{p}\bigg) < 4pe^{-Cn} + 2pe^{-t^2/2}.
  \end{align*}
Notice that once we have
\begin{equation}
\min_i |\Phi_{ii}| >2s\rho \max_{ij}|\Phi_{ij}| + 2\tau^{-1}\|X^T(XX^T)^{-1}\epsilon\|_\infty,\label{eq:P1}
\end{equation}
then the proof is complete because $\Phi - 2\tau^{-1}\|X^T(XX^T)^{-1}\epsilon\|_\infty$ is already a restricted diagonally dominant matrix. Let $t = \sqrt{Cn}/\nu$. The above equation then requires
\begin{align*}
  c_1\kappa^{-1}\frac{n}{p} &- \frac{2c_4\sqrt{C}\kappa s\rho}{\nu}\frac{n}{p} - \frac{2\sigma \sqrt{c_2C}\kappa t}{(1-c_0^{-1})\tau\nu}\frac{n}{p} \\
&= (c_1\kappa^{-1} - \frac{2c_4\sqrt{C}\kappa s\rho}{\nu} - \frac{2\sigma \sqrt{c_2C}\kappa}{(1-c_0^{-1})\tau\nu})\frac{n}{p}>0,
\end{align*}
which implies that
\begin{align}
  \nu > \frac{2c_4\sqrt{C}\kappa^{2}\rho s}{c_1} + \frac{2\sigma\sqrt{c_2C}\kappa^2}{c_1(1-c_0^{-1})\tau} = C_1\kappa^{2}\rho s +C_2\kappa^2\tau^{-1}\sigma > 1,\label{eq:nu}
\end{align}
where $C_1 = \frac{2c_4\sqrt{C}}{c_1},~ C_2 = \frac{2\sqrt{c_2C}}{c_1(1-c_0^{-1})}$. Therefore, the probability that \eqref{eq:P1} does not hold is
\begin{align*}
  P\bigg(\big\{\mbox{ \eqref{eq:P1} does not hold}\big\} \bigg)< (p + 5p^2)e^{-Cn} + 2p^2e^{-Cn/\nu}< (7 + \frac{1}{n})p^2e^{-Cn/\nu^2},
\end{align*}
where the second inequality is due to the fact that $p>n$ and $\nu>1$. Now for any $\delta>0$, \eqref{eq:P1} holds with probability at least $1-\delta$ requires that
\begin{align*}
  n \geq \frac{\nu^2}{C}\bigg(\log(7 + 1/n) + 2\log p - \log \delta\bigg),
\end{align*}
 which is certainly satisfied if (notice that $\sqrt{8}<3$),
 \begin{align*}
   n \geq \frac{2\nu^2}{C}\log\frac{3p}{\delta}.
 \end{align*}
 Now pushing $\nu$ to the limit as shown in \eqref{eq:nu} gives the precise
 condition we need, i.e.
 \begin{align*}
   n > 2C'\kappa^4(\rho s + \tau^{-1}\sigma)^2\log \frac{3p}{\delta},
 \end{align*}
where $C' = \max\{\frac{4c_4^2}{c_1^2}, \frac{4c_2}{c_1^2(1-c_0^{-1})^2}\}$.
\end{proof}

\end{document}